\newcommand{\E}{\mathbb{E}}
\newcommand{\Var}{\mathrm{Var}}
\newcommand{\ip}[2]{\langle #1,#2\rangle}
\newcommand{\R}{\mathbb{R}}
\theoremstyle{thmstyleone}%
\newtheorem{theorem}{Theorem}
\newtheorem{proposition}[theorem]{Proposition}%
\theoremstyle{thmstyletwo}%
\newtheorem{example}{Example}%
\newtheorem{remark}{Remark}%
\theoremstyle{thmstylethree}%
\newtheorem{definition}{Definition}%
\newtheorem{lemma}[theorem]{Lemma}
\newtheorem{assumption}[theorem]{Assumption}
\begin{document}

\title[Article Title]{Refining Cramér--Rao Bound With Multivariate Parameters: An Extrinsic Geometry Perspective}


\author*[]{\fnm{Sunder Ram} \sur{Krishnan}}\email{eeksunderram@gmail.com}



\affil[]{\orgdiv{Department of Computer Science and Engineering}, \orgname{Amrita Vishwa Vidyapeetham}, \orgaddress{\street{Amritapuri}, \city{Kollam}, \postcode{690525}, \state{Kerala}, \country{India}}}




\abstract{We derive a vector generalization of the curvature-corrected Cram\'er--Rao bound (CRB) in the nonasymptotic regime using a Hilbert space square-root embedding. Building on previous scalar results, we establish a \emph{directional} curvature correction derived from the second fundamental form of the model manifold. To obtain matrix-valued refinements, we formulate sufficient conditions for a conservative matrix-level correction using a semidefinite program (SDP) based on sum-of-squares (SOS) relaxations. The framework is rigorously illustrated with two distinct geometries: (i) a curved Gaussian location model, which reveals a characteristic \textit{pinching effect} where directional bounds vanish along principal axes despite non-zero extrinsic curvature and classical subspace-based bounds using the second-order Bhattacharyya matrix provide overly optimistic variance predictions that fail to account for the manifold's directional topology, and (ii) a spherical multinomial model where the curvature is isotropic. Our results demonstrate that while classical second-order corrections using the Bhattacharyya matrix provide useful benchmarks derived from the local coordinate basis, the proposed directional and SOS-certified bounds offer a more faithful and geometry-consistent representation of the directional sensitivity and fundamental limits of estimation in curved statistical families.}

\keywords{Cramér-Rao Bound, Curvature, Bhattacharyya Matrix, Second Fundamental Form, Statistical Manifold, Square Root Embedding.}



\maketitle

\section{Introduction}

In the multivariate setting, the Cramér–Rao bound (CRB) is naturally matrix-valued, and several refinements have been proposed. Regarding relevant work, we have previously surveyed several key research threads in the context of our work on scalar parameters (see \cite{srk}): (1) Bhattacharyya-type extensions of the CRB (for example, \cite{ghosh1987convergence}), (2) Amari et al.'s \cite{amari} classical work in information geometry and related higher-order asymptotic theory \cite{takeuchi2003second,okamoto1991asymptotic}, (3) intrinsic CRBs on Riemannian manifolds \cite{smith_intrinsic_crb,boumal2013_intrinsic_crb,bouchard2024}, (4) divergence-based relaxations \cite{mishra2020,ashok2020,mishra2021}, and (5) Wasserstein extensions \cite{Li,nishimori2025, garciatrillos_wcrb, amari_matsuda_Wasserstein, fukushi2024flatness, zuo2025} aimed at developing estimation theory, efficiency notions, and algorithmic methods, complementing the classical Fisher--Rao framework. For a complete review, we urge the reader to refer to our earlier work \cite{srk}. 

As noted there, an important work of direct relevance to ours is that of Efron \cite{efron1975}, who introduced the notion of ``statistical curvature'' of a one-parameter family at a given point and showed that large curvature corresponds to a breakdown of the nice properties it enjoys in common with the exponential family of distributions. Specifically, Efron considered the problem of estimating the scalar parameter in a curved exponential family, on the basis of an i.i.d. sample of size \(n\), using a squared error loss function to evaluate consistent and efficient estimators that are smooth functions of the sufficient statistic. It was proved that the \(O(\tfrac{1}{n^2})\) term in the variance \textit{asymptotics} precisely involves the squared statistical curvature, linking it to Rao's influential second-order efficiency theory. 

It is pertinent to note that Efron's curvature is the curvature of an embedding associated with the particular way a parameter space is placed inside a natural, higher-dimensional parameter space, as measured by the second fundamental form. 
A related fact may be found in Theorem 4.4 of \cite{amari}, where the statistical curvature term is identified with the e-curvature of the model manifold. However, to our knowledge, no prior work has developed \emph{nonasymptotic} extrinsic curvature corrections for classical variance bounds with vector parameters providing a geometric refinement that remains valid even outside the large-sample limit. This paper extends our earlier scalar-parameter results to the multivariate setting where more interesting directions are available for investigation, providing a geometric refinement of the CRB.

We first set up the framework as a natural extension of the univariate case. On the measurable space $(\mathcal X,\mathcal F)$, let $\mu$ be a fixed $\sigma$-finite measure. Consider a parametric family of probability measures $\{P_\theta:\theta\in\Theta\subset\mathbb{R}^d\}$ that are absolutely continuous with respect to \(\mu\) with strictly positive densities $f(x;\theta)$. Denote the ambient Hilbert space \(H := L^2(\mu)\).  The statistical model is mapped into \(H\) via the square-root embedding:
\[
s: \Theta \to H, \qquad s(\theta)(x) = \sqrt{f(x;\theta)}.
\]
Note that the image $s(\Theta)$ is a $d$-dimensional submanifold lying on the unit sphere of $H$.
Our first result uses the second fundamental form of \(s(\Theta)\subset H\) to derive a \emph{directional} curvature-aware refinement to the CRB; see Theorem \ref{thm:dir} for a lower bound on \(v^\top (\Sigma-J^{-1})v\), where \(v\in\mathbb{R}^d\), \(\Sigma\) is the covariance matrix of an unbiased estimator of \(\theta\in\Theta\), and \(J\) is the Fisher information matrix (FIM). Specifically, we treat the submanifold $s(\Theta)$ as an embedding into the Hilbert space $H$, where the model-specific extrinsic curvature captures the component of the path-acceleration that is orthogonal to the tangent space $\mathcal{T}_1:=\mathrm{span}\{\partial_is(\theta)\}_{i=1}^d$, characterizing the bending of the model within the ambient geometry of square-root densities.
This directional bound $\mathcal{R}(v)$ reveals a subtle \textit{pinching effect} in multivariate models, where curvature sensitivity may vanish along specific parameter axes despite non-zero extrinsic curvature; that is, the directional curvature may not capture unexplained variance of an unbiased estimator, implying absence of curvature-induced corrections to classical statistical bounds/inefficiency of the estimator along such directions. Following an analysis of the obstacles to achieving a generic matrix-valued correction \(\Sigma-J^{-1}\succeq \Delta\), we conclude that a universal refinement is only possible through a conservative approach. Thus, a certified matrix correction $\Delta$ is derived using a constructive method that employs a semidefinite program (SDP) based on sum-of-squares (SOS) relaxations. This result is presented in Theorem \ref{thm:sdp}. Taking into account the higher-order extrinsic geometry of the embedding $s(\Theta)\subset H$ by considering the corresponding jet spaces, our second-order analysis is extended to result in directional curvature-based corrections to the Bhattacharyya-type higher-order versions of the CRB (cf. Theorem \ref{thm:hor}), allowing for a nested sequence of increasingly tight lower bounds. The paper concludes with two detailed analytical examples in Section \ref{sec:ex}: a curved Gaussian location model illustrating the degenerate pinching effect, and a spherical multinomial model where isotropic geometry allows the SOS-certified bound to recover the classical second-order limits. These examples demonstrate that while classical benchmarks such as the Bhattacharyya matrix are direction-agnostic, our approach explicitly accounts for the directional sensitivity of the manifold's extrinsic curvature, correctly identifying when the `information gap' vanishes along specific axes.

\section{Setup, notation, and assumptions}\label{sec:assumptions}

As stated in the introduction, let $(\mathcal X,\mathcal F)$ be a measurable space with $\mu$ being a fixed $\sigma$-finite measure. We consider a parametric family of probability measures $\{P_\theta:\theta\in\Theta\subset\mathbb{R}^d\}$ possessing strictly positive densities $f(x;\theta) = \frac{dP_\theta}{d\mu}(x)$. Expectations under $P_\theta$ are denoted by $\mathbb{E}_\theta[\cdot]$.

Fix the ambient Hilbert space \(H = L^2(\mu)\). For any functions $u, v \in H$, the inner product and norm are defined as:
\[
\ip{u}{v} := \int_{\mathcal{X}} u(x)v(x)\,d\mu(x), \qquad \|u\|^2 = \ip{u}{u}.
\]
The orthogonal projection of an element $h \in H$ onto a closed subspace \(V \subseteq H\) is denoted by $\mathrm{Proj}_V(h)$. We map the statistical model into the Hilbert space via the square-root embedding:
\[
s: \Theta \to H, \qquad s(\theta)(x) = \sqrt{f(x;\theta)}.
\]
Since $\|s(\theta)\|^2 = \int f(x;\theta) d\mu = 1$, the image of the parameter space $\Theta$ under $s$ is a $d$-dimensional submanifold $\mathcal{M} = s(\Theta)$ lying on the unit sphere of $H$.

For a multi-index $\alpha = (\alpha_1, \dots, \alpha_d) \in \mathbb{N}_0^d$ of order $|\alpha| = \sum \alpha_i$, we denote the partial derivative operator by $\partial^\alpha := \partial_1^{\alpha_1} \dots \partial_d^{\alpha_d}$. We define the $k$-th order score functions $Y_\alpha(x;\theta)$ and the embedded derivative vectors $\eta_\alpha(\theta) \in H$ as:
\[
Y_\alpha(x;\theta) := \frac{\partial^\alpha f(x;\theta)}{f(x;\theta)}, \qquad \eta_\alpha(\theta) := \partial^\alpha s(\theta).
\]
\begin{lemma}[Fa\`a di Bruno Expansion and Statistical Scores]
\label{lem:faa_di_bruno}
The $m$-th order embedded derivative vectors $\eta_\alpha$ can be expressed as a product of the square-root density $s$ and a polynomial in the lower-order score functions $Y_\beta$. Specifically, for orders $1$ and $2$:
\begin{align}
\eta_i &= \frac{1}{2} s Y_i, \label{eq:eta1}\\
\eta_{ij} &= \frac{1}{2} s \left( Y_{ij} - \frac{1}{2} Y_i Y_j \right). \label{eq:eta2}
\end{align}
Generally, for any multi-index $\alpha$, $\eta_\alpha = s \cdot \mathcal{P}_\alpha(Y_{\beta \le \alpha})$, where $\mathcal{P}_\alpha$ is the polynomial determined by the Fa\`a di Bruno formula for the composite function $\sqrt{f(\theta)}$.
\end{lemma}

\begin{proof}
Equation \eqref{eq:eta1} follows from the chain rule $\partial_i \sqrt{f} = \frac{\partial_i f}{2\sqrt{f}}$. Differentiating again with respect to $\theta_j$ yields:
\[
\partial_j \eta_i = \partial_j \left( \frac{\partial_i f}{2\sqrt{f}} \right) = \frac{\partial_{ij} f}{2\sqrt{f}} - \frac{\partial_i f \partial_j f}{4 f^{3/2}} = \frac{1}{2} \sqrt{f} \left( \frac{\partial_{ij} f}{f} - \frac{1}{2} \frac{\partial_i f}{f} \frac{\partial_j f}{f} \right).
\]
Substituting $s = \sqrt{f}$ and $Y_\alpha = \partial^\alpha f / f$ gives \eqref{eq:eta2}.
\end{proof}

This identifies the tangent space $\mathcal{T}_1(\theta) = \mathrm{span}\{\eta_i\}_{i=1}^d$ as the span of the classical score functions in $H$. 

We define the $m$-th order jet space at $\theta$ as the subspace:
\[
\mathcal{T}_m(\theta) := \mathrm{span} \left\{ \eta_\alpha(\theta) : 1 \le |\alpha| \le m \right\} \subset H.
\]
The total dimension of this space is $D_m = \binom{d+m}{m}-1$.

\begin{remark}[Statistical Significance of the Expansion]
The Fa\`a di Bruno expansion is critical for the operational use of the curvature bounds derived in our work. First, it allows for the calculation of the Gramian matrices and other relevant inner products in our further developments as standard expectations under $P_\theta$. For example, the inner products in $H$ translate to:
\[
\ip{\eta_{ij}}{\eta_{kl}} = \frac{1}{4} \mathbb{E}_\theta \left[ \left( Y_{ij} - \frac{1}{2} Y_i Y_j \right) \left( Y_{kl} - \frac{1}{2} Y_k Y_l \right) \right].
\]
Second, it reveals the algebraic structure of the jet space; to give a taste of what is to come, since $\eta_{ij}$ is a linear combination of the second-order score $Y_{ij}$ and the product of first-order scores, the extrinsic curvature -- the portion of $\eta_{ij}$ orthogonal to the tangent space $\mathcal{T}_1$ -- is governed by the relationship between these statistical moments. 
\end{remark}

Throughout the paper, we assume the following regularity conditions.

\begin{assumption}[Regularity of order $m$]
\label{assump:smooth}
\begin{enumerate}[\rm (i)]

\item \textbf{Differentiability:} The map $s: \Theta \to H$ is $m$-times Fr\'echet differentiable. The derivative vectors $\{\eta_\alpha\}_{1 \le |\alpha| \le m}$ are $\mu$-measurable and lie in $H$ for every $\theta \in \Theta$.
\item \textbf{Leibniz Rule:} For any estimator $T(X)$ such that $\mathbb{E}_\theta[T^2] < \infty$, the following identity holds for all $|\alpha| \le m$:
\[
\partial^\alpha \int T(x) f(x;\theta) d\mu = \int T(x) \partial^\alpha f(x;\theta) d\mu.
\]
\item \textbf{Non-degeneracy:} The $m$-th order Gram matrix $G^{(m)}(\theta)$ with entries $G_{\alpha, \beta} = \ip{\eta_\alpha}{\eta_\beta}$ is strictly positive definite (full rank) at the parameter $\theta$.
\end{enumerate}
\end{assumption}

Under Assumption \ref{assump:smooth}, the FIM $J(\theta)$ is exactly $4G^{(1)}(\theta)\equiv 4G(\theta)$, i.e., $J_{ij}(\theta) = 4\ip{\eta_i}{\eta_j}$. Furthermore, the non-degeneracy condition ensures that the derivatives are linearly independent, so that $\dim(\mathcal{T}_m) = D_m$.

\section{Projection proof of the classical matrix CRB}\label{sec:CRB}

Although not novel, we initially establish a simple result that presents the classical matrix CRB in terms of the projection of the estimator error onto the tangent space of the embedded statistical manifold $\mathcal{M}$. This projection perspective serves as a rigorous springboard for the higher-order geometric developments that follow.

With \(X\sim P_\theta\), let \(T(X)\in\mathbb{R}^d\) be an unbiased estimator: \(\mathbb{E}_\theta[T(X)]=\theta\).
Define the centered error vector \(Z_0 := T(X)-\theta\in\mathbb{R}^d\) and lift its components to the Hilbert space \(H\) by
\[
\widetilde Z^{(p)}(x) := Z_0^{(p)}\, s(\theta)(x)\in H,\qquad p=1,\dots,d.
\]
By construction, the inner product of these lifted components recovers the covariance:
\[
\langle\widetilde Z^{(p)},\widetilde Z^{(q)}\rangle = \int_{\mathcal{X}} (T_p(x)-\theta_p)(T_q(x)-\theta_q) f(x;\theta) \, d\mu = \mathbb{E}_\theta[Z_0^{(p)} Z_0^{(q)}] =: \Sigma_{pq}(\theta).
\]
Recall the tangent vectors \(\eta_i(\theta)=\partial_i s(\theta)\in H\) and the tangent subspace \(\mathcal{T}_1 = \operatorname{span}\{\eta_1,\dots,\eta_d\} \subset H\).

\begin{proposition}[Projection inequality]\label{prop:proj_ineq}
Let \(B\in\mathbb{R}^{d\times d}\) be the matrix with entries defined by the inner products of the projected lifted errors:
\[
B_{pq} := \big\langle \operatorname{Proj}_{\mathcal{T}_1}\widetilde Z^{(p)},\, \operatorname{Proj}_{\mathcal{T}_1}\widetilde Z^{(q)}\big\rangle,
\]
where \(\operatorname{Proj}_{\mathcal{T}_1}:H\to \mathcal{T}_1\) is the orthogonal projection. Then the estimator covariance satisfies:
\[
\Sigma(\theta) \succeq B.
\]
\end{proposition}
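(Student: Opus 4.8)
The plan is to exhibit $\Sigma(\theta)-B$ as a Gram matrix of vectors in $H$, which makes its positive semidefiniteness immediate. Under Assumption~\ref{assump:smooth}(iii) the $\{\eta_i\}$ are linearly independent, so $\mathcal{T}_\theta$ is a $d$-dimensional, hence closed, subspace of $H$; the orthogonal projection $\operatorname{Proj}_{\mathcal{T}_\theta}$ is therefore well defined and linear. Moreover each lift $\widetilde Z^{(p)}$ genuinely lies in $H$, since $\|\widetilde Z^{(p)}\|^2=\E_\theta[(Z_0^{(p)})^2]=\Sigma_{pp}(\theta)<\infty$ for an unbiased estimator with finite error covariance.

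First I would split each lift into its tangential and normal parts,
\[
\widetilde Z^{(p)}=\operatorname{Proj}_{\mathcal{T}_\theta}\widetilde Z^{(p)}+R^{(p)},\qquad R^{(p)}:=\widetilde Z^{(p)}-\operatorname{Proj}_{\mathcal{T}_\theta}\widetilde Z^{(p)}\in\mathcal{T}_\theta^{\perp}.
\]
By bilinearity of $\langle\cdot,\cdot\rangle$ and the defining orthogonality $\langle\operatorname{Proj}_{\mathcal{T}_\theta}\widetilde Z^{(p)},R^{(q)}\rangle=0$, the cross terms vanish and
\[
\Sigma_{pq}(\theta)=\langle\widetilde Z^{(p)},\widetilde Z^{(q)}\rangle=\langle\operatorname{Proj}_{\mathcal{T}_\theta}\widetilde Z^{(p)},\operatorname{Proj}_{\mathcal{T}_\theta}\widetilde Z^{(q)}\rangle+\langle R^{(p)},R^{(q)}\rangle=B_{pq}+\langle R^{(p)},R^{(q)}\rangle.
\]
Hence $\Sigma(\theta)-B=\big(\langle R^{(p)},R^{(q)}\rangle\big)_{p,q=1}^d$ is the Gram matrix of $R^{(1)},\dots,R^{(d)}$, so for every $a\in\R^d$,
\[
a^{\top}\big(\Sigma(\theta)-B\big)a=\Big\|\textstyle\sum_{p=1}^d a_pR^{(p)}\Big\|^2\ge 0,
\]
which is exactly $\Sigma(\theta)\succeq B$. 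Equivalently, and perhaps more transparently for the write-up, for fixed $a$ one may put $u_a:=\sum_p a_p\widetilde Z^{(p)}$; linearity of the projection gives $a^{\top}Ba=\|\operatorname{Proj}_{\mathcal{T}_\theta}u_a\|^2$ while $a^{\top}\Sigma(\theta)a=\|u_a\|^2$, and the Pythagorean identity $\|u_a\|^2=\|\operatorname{Proj}_{\mathcal{T}_\theta}u_a\|^2+\|u_a-\operatorname{Proj}_{\mathcal{T}_\theta}u_a\|^2$ finishes the argument.

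There is no genuine analytic obstacle here; the only points needing attention are the regularity ones already built into Assumption~\ref{assump:smooth}, namely finiteness of the error covariance (so the $\widetilde Z^{(p)}$ belong to $H$) and linear independence of the $\eta_i$ (so $\mathcal{T}_\theta$ is closed and $\operatorname{Proj}_{\mathcal{T}_\theta}$ is genuinely linear). The one computation I would actually carry out, and relegate to a short remark, is the identification of $B$ with the classical bound: expanding $\operatorname{Proj}_{\mathcal{T}_\theta}\widetilde Z^{(p)}$ in the basis $\{\eta_i\}$ yields $B=C^{\top}G(\theta)^{-1}C$ with $C_{ip}=\langle\eta_i,\widetilde Z^{(p)}\rangle=\tfrac12\E_\theta[Y_iZ_0^{(p)}]=\tfrac12\,\partial_i\E_\theta[T^{(p)}]=\tfrac12\delta_{ip}$ by unbiasedness and Assumption~\ref{assump:smooth}(ii); since $G(\theta)=\tfrac14 J(\theta)$ this collapses to $B=J(\theta)^{-1}$, so Proposition~\ref{prop:proj_ineq} is precisely the classical matrix CRB $\Sigma(\theta)\succeq J(\theta)^{-1}$ expressed geometrically, and the normal residuals $R^{(p)}$ are exactly the slack that the curvature corrections in the sequel will attempt to tighten.
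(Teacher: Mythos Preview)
Your argument is correct and essentially identical to the paper's: both fix $v\in\R^d$, form $Z_v=\sum_p v_p\widetilde Z^{(p)}$, and use the contraction/Pythagorean property of orthogonal projection to get $v^\top\Sigma v=\|Z_v\|^2\ge\|\operatorname{Proj}_{\mathcal{T}_\theta}Z_v\|^2=v^\top B v$; your Gram-matrix-of-residuals phrasing is just the same computation written matrixwise. Note that the closing identification $B=J(\theta)^{-1}$ you sketch is not part of this proposition in the paper---it is stated and proved separately as the next result.
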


\begin{proof}
Fix an arbitrary direction vector \(v=(v_1,\dots,v_d)^\top\in\mathbb{R}^d\). Define the combined lifted error
\[
Z_v := \sum_{p=1}^d v_p\,\widetilde Z^{(p)} \in H.
\]
The squared norm of this vector in \(H\) corresponds to the variance in direction \(v\):
\[
\| Z_v \|^2 = \sum_{p,q} v_p v_q \langle \widetilde Z^{(p)}, \widetilde Z^{(q)} \rangle = v^\top\Sigma(\theta)v.
\]

Since the orthogonal projection onto a closed subspace of a Hilbert space is a contraction, we have:
\[
\|Z_v\|^2 \ge \big\| \operatorname{Proj}_{\mathcal{T}_1} Z_v\big\|^2.
\]
By the linearity of the projection operator,
\[
\operatorname{Proj}_{\mathcal{T}_1} Z_v = \sum_{p=1}^d v_p\,\operatorname{Proj}_{\mathcal{T}_1}\widetilde Z^{(p)}.
\]
The squared norm of the projected vector is then:
\[
\big\| \operatorname{Proj}_{\mathcal{T}_1} Z_v\big\|^2
= \sum_{p,q} v_p v_q\,\big\langle \operatorname{Proj}_{\mathcal{T}_1}\widetilde Z^{(p)},\operatorname{Proj}_{\mathcal{T}_1}\widetilde Z^{(q)}\big\rangle
= v^\top B v.
\]
Combining these results, we obtain \(v^\top\Sigma v \ge v^\top B v\) for all \(v\in\mathbb{R}^d\), which implies the matrix inequality \(\Sigma\succeq B\).
\end{proof}

\begin{proposition}[Identification of the projection matrix: $B=J^{-1}$]\label{prop:B_is_Jinv}
Under the regularity conditions of Assumption \ref{assump:smooth}, the matrix \(B\) defined in Proposition \ref{prop:proj_ineq} is identically the inverse FIM:
\[
B = J(\theta)^{-1}.
\]
\end{proposition}

\begin{proof}
The Gram matrix of the tangent vectors \(G \in \mathbb{R}^{d\times d}\) has elements \(G_{ij} = \langle\eta_i,\eta_j\rangle = \frac{1}{4} J_{ij}\). Since \(G\) is invertible by assumption, the orthogonal projection of any \(u \in H\) onto \(\mathcal{T}_1\) is given by:
\[
\operatorname{Proj}_{\mathcal{T}_1} u = \sum_{j=1}^d \left( G^{-1} \, b(u) \right)_j \,\eta_j,
\qquad \text{where } b(u)_j := \langle u,\eta_j\rangle.
\]
We apply this to \(u = \widetilde Z^{(p)} = Z_0^{(p)} s\). The projection coefficients are:
\[
b^{(p)}_j = \langle Z_0^{(p)} s, \tfrac{1}{2} s Y_j \rangle = \tfrac{1}{2} \mathbb{E}_\theta[Z_0^{(p)} Y_j].
\]
Differentiating the unbiasedness condition \(\mathbb{E}_\theta[T_i(X)] = \theta_i\) with respect to \(\theta_j\) and applying the Leibniz rule from Assumption \ref{assump:smooth} yields:
\[
\delta_{ij} = \frac{\partial}{\partial \theta_j} \int T_i(x) f(x;\theta) d\mu = \int T_i(x) \partial_j f(x;\theta) d\mu = \mathbb{E}_\theta[T_i Y_j].
\]
Since \(\mathbb{E}_\theta[Y_j] = 0\), we have \(\mathbb{E}_\theta[Z_0^{(i)} Y_j] = \mathbb{E}_\theta[(T_i - \theta_i) Y_j] = \delta_{ij}\). Thus, for each \(p\), the coefficient vector is \(b^{(p)} = \frac{1}{2} e_p\), where \(e_p\) is the \(p\)-th standard basis vector. The projected error is:
\[
\operatorname{Proj}_{\mathcal{T}_1}\widetilde Z^{(p)} = \tfrac{1}{2} \sum_{j=1}^d (G^{-1})_{jp} \eta_j.
\]
The entries of matrix \(B\) are then:
\[
B_{pq} = \tfrac{1}{4} \sum_{j,k} (G^{-1})_{jp} (G^{-1})_{kq} \langle \eta_j, \eta_k \rangle = \tfrac{1}{4} (G^{-1} G G^{-1})_{pq} = \tfrac{1}{4} (G^{-1})_{pq}.
\]
Given \(G = \frac{1}{4} J\), it follows that \(G^{-1} = 4 J^{-1}\), hence \(B_{pq} = (J^{-1})_{pq}\).
\end{proof}

The projection proof of the classical CRB demonstrates that the ``efficiency gap" \(\Sigma - J^{-1}\) is precisely the squared norm of the portion of the lifted error \(Z_v\) lying in the orthogonal complement of the tangent space. We now move beyond the tangent space to analyze this residual using the extrinsic geometry of the manifold \(s(\Theta)\).

\section{Induced connection and the second fundamental form}\label{sec:geometry}

Consider the flat ambient derivative in the Hilbert space $H$. The geometry of the embedded manifold $\mathcal{M}$ is characterized by how these ambient derivatives decompose into components tangent and normal to the manifold. Specifically, the induced connection on the tangent bundle of \(\mathcal{M}\subset H\) is defined by projecting these ambient derivatives back onto the tangent space \(\mathcal{T}_1\).

\begin{definition}[Induced connection]
For the coordinate vector fields $\partial_i$ and the corresponding tangent vectors $\eta_j = \partial_j s$, the induced connection $\nabla^{\mathrm{ind}}$ is defined as:
\[
\nabla^{\mathrm{ind}}_{\partial_i}\eta_j := \mathrm{Proj}_{\mathcal{T}_1}\big(\partial_i\eta_j\big)
= \sum_{\ell=1}^d \Gamma^\ell_{ij}\,\eta_\ell,
\]
where the Christoffel symbols of the second kind are given by:
\[
\Gamma^\ell_{ij}(\theta) = \sum_{m=1}^d \ip{\partial_i\eta_j}{\eta_m}\,(G^{-1})_{m\ell}.
\]
\end{definition}

The portion of the second derivative that is lost during this projection represents the extrinsic ``bending" of the manifold within the Hilbert space. This is captured by the second fundamental form.

\begin{definition}[Second fundamental form]
The second fundamental form is the normal-valued symmetric bilinear form $\Pi: \mathcal{T}_1 \times \mathcal{T}_1 \to \mathcal{T}_1^\perp$ defined by the Gauss formula:
\[
\Pi(\partial_i,\partial_j) := \partial_i\eta_j - \nabla^{\mathrm{ind}}_{\partial_i}\eta_j = \partial_i\eta_j - \sum_{\ell=1}^d \Gamma^\ell_{ij}\,\eta_\ell.
\]
We shall denote the components of this form as $\Pi_{ij} := \Pi(\partial_i, \partial_j)$. By construction, $\Pi_{ij} \in \mathcal{T}_1^\perp$ for all $i, j \in \{1, \dots, d\}$.
\end{definition}

\begin{proposition}[Symmetry of $\Pi$]
For the square-root embedding $s:\Theta\to H$ and the second fundamental form defined above, the components satisfy the symmetry relation $\Pi_{ij} = \Pi_{ji}$ as elements of the normal space $\mathcal{T}_1^\perp$.
\end{proposition}

\begin{proof}
By definition, the components are given by:
\[
\Pi_{ij} = \partial_i\eta_j - \sum_{m,\ell=1}^d \ip{\partial_i\eta_j}{\eta_m}\,(G^{-1})_{m\ell}\,\eta_\ell.
\]
To establish equality, it suffices to show that their projections onto any vector $Z$ in the normal space $\mathcal{T}_1^\perp$ coincide. Let $Z \in \mathcal{T}_1^\perp$, which implies $\ip{\eta_\ell}{Z} = 0$ for all $\ell \in \{1, \dots, d\}$. Then:
\begin{align*}
\ip{\Pi_{ij}}{Z} &= \ip{\partial_i\eta_j}{Z} - \sum_{m,\ell} \ip{\partial_i\eta_j}{\eta_m} (G^{-1})_{m\ell} \ip{\eta_\ell}{Z} \\
&= \ip{\partial_i\eta_j}{Z}.
\end{align*}
Similarly, we find $\ip{\Pi_{ji}}{Z} = \ip{\partial_j\eta_i}{Z}$. Under the regularity conditions of Assumption \ref{assump:smooth}, the embedding $s$ is at least $C^2$ in the Fr\'echet sense, and the mixed partial derivatives commute:
\[
\partial_i\eta_j = \partial_i\partial_j s = \partial_j\partial_i s = \partial_j\eta_i.
\]
It follows that $\ip{\partial_i\eta_j}{Z} = \ip{\partial_j\eta_i}{Z}$ for all $Z \in \mathcal{T}_1^\perp$, which implies $\Pi_{ij} = \Pi_{ji}$.
\end{proof}

The second fundamental form encodes the extrinsic curvature of the statistical model. In the following section, we demonstrate how this geometric object provides a correction term to refine the directional CRB.

\section{Directional curvature-corrected CRB}

With \(X\sim P_\theta\), let \(T(X)\in\mathbb{R}^d\) be an unbiased estimator of the vector parameter \(\theta\). As before, write the centered error \(Z_0(x)=T(x)-\theta\in\mathbb{R}^d\), and lift componentwise:
\[
\widetilde Z^{(p)}(x) = Z_0^{(p)}(x)\, s(\theta)(x) \in H,\qquad p=1,\dots,d.
\]
For any direction $v\in\mathbb{R}^d$, we define the combined lifted error $Z_v$ and the covariant direction vector $\widetilde v$ as:
\[
Z_v := \sum_{p=1}^d v_p \widetilde Z^{(p)} \in H,\qquad
\widetilde v := G^{-1} v\in\mathbb{R}^d.
\]
The use of the inverse Gram matrix $G^{-1}$ in the definition of \(\widetilde v\) is needed to ensure that the resulting construction is coordinate-invariant, effectively mapping the direction $v$ into the dual space via the metric.

\begin{definition}[Directional curvature vector]
The curvature (normal) vector associated with the direction $v$ is defined by the second fundamental form applied to the covariant vector $\widetilde v$:
\[
\Pi_v := \sum_{i,j=1}^d \widetilde v_i \widetilde v_j \,\Pi_{ij} \in \mathcal{T}_1^\perp.
\]
\end{definition}

Geometrically, $\Pi_v$ represents the specific normal component of the manifold's acceleration when moving in the direction of the tangent vector $\sum \widetilde v_i \eta_i$.

\begin{theorem}[Directional curvature-corrected CRB]\label{thm:dir}
Under the regularity of Assumption \ref{assump:smooth}, for every direction $v\in\mathbb{R}^d$, the estimator covariance $\Sigma(\theta)$ satisfies:
\begin{equation}
\label{eq:di}
v^\top\!\big(\Sigma(\theta)-J(\theta)^{-1}\big) v
= \|Z_v - \mathrm{Proj}_{\mathcal{T}_1} Z_v\|^2
\ge \frac{\ip{Z_v}{\Pi_v}^{2}}{\|\Pi_v\|^{2}},
\end{equation}
where the right-hand side is taken to be $0$ if $\Pi_v=0$. Equivalently, in terms of the component pairings:
\begin{equation}
\label{eq:ex}
v^\top(\Sigma-J^{-1})v \ge
\frac{\Big(\sum_{p,i,j} v_p\,\widetilde v_i\widetilde v_j\,
\ip{\widetilde Z^{(p)}}{\Pi_{ij}}\Big)^2}
{\big\|\sum_{i,j}\widetilde v_i\widetilde v_j\,\Pi_{ij}\big\|^2}.
\end{equation}
\end{theorem}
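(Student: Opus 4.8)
The first equality in \eqref{eq:di} is immediate from Propositions \ref{prop:proj_ineq} and \ref{prop:B_is_Jinv}: we have $v^\top \Sigma v = \|Z_v\|^2$ exactly as in the proof of Proposition \ref{prop:proj_ineq}, and $v^\top J^{-1} v = v^\top B v = \|\mathrm{Proj}_{\mathcal{T}_\theta} Z_v\|^2$ by the identification $B = J^{-1}$; subtracting and using the Pythagorean decomposition $\|Z_v\|^2 = \|\mathrm{Proj}_{\mathcal{T}_\theta} Z_v\|^2 + \|Z_v - \mathrm{Proj}_{\mathcal{T}_\theta} Z_v\|^2$ gives the stated identity. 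So the real content is the inequality: the residual $R_v := Z_v - \mathrm{Proj}_{\mathcal{T}_\theta} Z_v$, which lives in $\mathcal{T}_\theta^\perp$, has squared norm bounded below by its squared component along \emph{any} fixed normal vector. The plan is to take that fixed normal direction to be $\Pi_v$.

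The key observation is that $\Pi_v \in \mathcal{T}_\theta^\perp$ (a finite linear combination of the $\Pi_{ij}$, each normal by definition of the second fundamental form), so by Cauchy--Schwarz applied to $R_v$ and $\Pi_v$,
\[
\|R_v\|^2 \,\|\Pi_v\|^2 \;\ge\; \ip{R_v}{\Pi_v}^2 .
\]
Next I would show $\ip{R_v}{\Pi_v} = \ip{Z_v}{\Pi_v}$: since $\Pi_v$ is normal and $Z_v - R_v = \mathrm{Proj}_{\mathcal{T}_\theta} Z_v$ is tangent, $\ip{Z_v - R_v}{\Pi_v} = 0$. Dividing through by $\|\Pi_v\|^2$ (when nonzero) yields the first form \eqref{eq:di}; the convention handles $\Pi_v = 0$, where the right-hand side is declared $0$ and the inequality is trivial.

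For the equivalent form \eqref{eq:ex}, I would simply expand both inner products coordinatewise. Writing $Z_v = \sum_p v_p \widetilde Z^{(p)}$ and $\Pi_v = \sum_{i,j} \widetilde v_i \widetilde v_j \Pi_{ij}$, bilinearity of $\ip{\cdot}{\cdot}$ gives $\ip{Z_v}{\Pi_v} = \sum_{p,i,j} v_p\, \widetilde v_i \widetilde v_j\, \ip{\widetilde Z^{(p)}}{\Pi_{ij}}$, and $\|\Pi_v\|^2 = \|\sum_{i,j} \widetilde v_i \widetilde v_j \Pi_{ij}\|^2$ by definition; substituting into \eqref{eq:di} produces \eqref{eq:ex} verbatim. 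There is no real obstacle here — the argument is a one-line Cauchy--Schwarz estimate on the normal component once the Pythagorean identity is in place; the only points requiring a word of care are the $\Pi_v = 0$ convention and checking that $\mathrm{Proj}_{\mathcal{T}_\theta} Z_v \perp \Pi_v$, both of which are immediate from $\Pi_v \in \mathcal{T}_\theta^\perp$.
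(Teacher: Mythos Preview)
Your proposal is correct and follows essentially the same route as the paper: both arguments hinge on applying Cauchy--Schwarz in $H$ to the normal residual $Z_v-\mathrm{Proj}_{\mathcal{T}_\theta}Z_v$ against the normal vector $\Pi_v$, using $\mathrm{Proj}_{\mathcal{T}_\theta}Z_v\perp\Pi_v$ to replace $\ip{R_v}{\Pi_v}$ by $\ip{Z_v}{\Pi_v}$. Your write-up is in fact slightly more explicit than the paper's, since you spell out the Pythagorean identity that justifies the equality in \eqref{eq:di} and the bilinear expansion yielding \eqref{eq:ex}, both of which the paper leaves implicit.
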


\begin{proof}
By Proposition \ref{prop:proj_ineq}, the term $v^\top(\Sigma-J^{-1})v$ is precisely the squared norm of the residual error $R_v := Z_v - \mathrm{Proj}_{\mathcal{T}_1} Z_v$. Since the projection of $Z_v$ onto the tangent space $\mathcal{T}_1$ is orthogonal, the residual $R_v$ lies entirely in the normal space $\mathcal{T}_1^\perp$.

We have that $\Pi_v \in \mathcal{T}_1^\perp$. Applying the Cauchy--Schwarz inequality in $H$ to the vectors $R_v$ and $\Pi_v$, we obtain:
\[
\|R_v\|^2 \ge \frac{\ip{R_v}{\Pi_v}^2}{\|\Pi_v\|^2}.
\]

Because $\mathrm{Proj}_{\mathcal{T}_1} Z_v$ is orthogonal to all elements of the normal space, we have $\ip{\mathrm{Proj}_{\mathcal{T}_1} Z_v}{\Pi_v} = 0$. Consequently, $\ip{R_v}{\Pi_v} = \ip{Z_v}{\Pi_v}$, which completes the proof.
\end{proof}

\begin{remark}[Reduction to the scalar case \(d=1\)]
When \(d=1\), Theorem~\ref{thm:dir} recovers the scalar curvature-corrected CRB previously established in \cite{srk}. For a scalar parameter \(\theta\), let \(\eta = \partial_\theta s\), and \(\Pi = \Pi(\partial_\theta, \partial_\theta)\). For any nonzero direction \(v \in \mathbb{R}\), we have:
\[
\widetilde v = G^{-1} v = \|\eta\|^{-2} v, \quad Z_v = v \widetilde Z, \quad \Pi_v = (\|\eta\|^{-2} v)^2 \Pi.
\]
Substituting these into the bound:
\[
v^2 (\Var_\theta[T] - I(\theta)^{-1}) \ge \frac{(v \cdot (\|\eta\|^{-2} v)^2 \ip{\widetilde Z}{\Pi})^2}{(\|\eta\|^{-2} v)^4 \|\Pi\|^2} = v^2 \frac{\ip{\widetilde Z}{\Pi}^2}{\|\Pi\|^2}.
\]
Dividing by \(v^2\) yields the scalar result \(\Var_\theta[T] - I(\theta)^{-1} \ge \frac{\ip{\widetilde Z}{\Pi}^2}{\|\Pi\|^2}\).
\end{remark}

This directional bound provides a local, high-fidelity characterization of the variance gap. However, the bound $\mathcal{R}(v):=\frac{\ip{Z_v}{\Pi_v}^{2}}{\|\Pi_v\|^{2}}$ is a rational function, which limits its ability to be represented by a single positive semidefinite (PSD) matrix and motivates the SDP approach.

\section{Matrix curvature correction}
\label{sec:mc}
In Theorem \ref{thm:dir}, we derived a family of directional inequalities. A natural question arises: Is it possible to compress this infinite set of directional constraints into a single symmetric PSD matrix $\Delta$ such that the matrix inequality $\Sigma \succeq J^{-1} + \Delta$ holds?

\subsection{On ``directional'' vs ``matrix'' curvature corrections}
\label{sec:dm}
Recalling the result \eqref{eq:di} from Theorem \ref{thm:dir}, we define the numerator and denominator polynomials:
\[
N(v) := \langle Z_v,\Pi_v\rangle,\qquad D(v) := \|\Pi_v\|^2.
\]
By construction, \(Z_v\) is linear in the coordinates of \(v\), while \(\Pi_v\) is quadratic in \(v\). It follows that:
\[
N(v)\ \text{is a homogeneous polynomial of degree } 3,\] and \[D(v)\ \text{is a homogeneous polynomial of degree } 4.
\]
The directional curvature correction \(\mathcal R(v)\) appearing in Theorem~\ref{thm:dir} can then be expressed as:
\[
\mathcal R(v) \:=\; \frac{N(v)^2}{D(v)}.
\]
Observe that \(\mathcal R(v)\) is homogeneous of degree \(6-4=2\) in \(v\). Thus, \(\mathcal R\) is a homogeneous degree-2 function on \(\mathbb{R}^d\). However, in general, it is a rational function rather than a polynomial. This distinction is critical: a matrix-valued correction corresponds to a quadratic polynomial \(v^\top \Delta v\), whereas the true geometric limit is a more complex rational surface.

\begin{proposition}[Algebraic obstruction to an exact matrix representation]
There exists a symmetric matrix \(\Delta \in \mathbb{R}^{d\times d}\) such that
\[
v^\top \Delta v = \frac{N(v)^2}{D(v)} \qquad \text{for all } v \in \mathbb{R}^d
\tag{*}
\]
if and only if the degree-6 homogeneous polynomial \(N(v)^2\) is divisible by the degree-4 homogeneous polynomial \(D(v)\), with the quotient being a homogeneous quadratic polynomial. Equivalently, (*) holds if and only if there exists a homogeneous quadratic polynomial \(Q(v)\) such that:
\[
N(v)^2 = Q(v)\,D(v) \qquad (\text{as an identity of polynomials}).
\]
\label{prop:alg}
\end{proposition}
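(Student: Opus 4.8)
The plan is to prove the equivalence by a direct double implication, treating everything as identities of homogeneous polynomials on $\R^d$. The key background fact is that $\R[v_1,\dots,v_d]$ is a unique factorization domain, so ``$P$ is divisible by $D$'' is unambiguous and the quotient, when it exists, is unique.

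\medskip
\textbf{($\Leftarrow$)} Suppose there is a homogeneous quadratic polynomial $Q(v)$ with $N(v)^2 = Q(v)\,D(v)$ as an identity of polynomials. Then for every $v$ with $D(v)\neq 0$ we have $N(v)^2/D(v) = Q(v)$. Every homogeneous quadratic polynomial in $d$ variables is of the form $v^\top\Delta v$ for a unique symmetric matrix $\Delta\in\R^{d\times d}$ (read off the coefficients: $\Delta_{pp}$ is the coefficient of $v_p^2$, and $\Delta_{pq}=\Delta_{qp}$ is half the coefficient of $v_p v_q$ for $p\neq q$). So $v^\top\Delta v = N(v)^2/D(v)$ on the open dense set $\{D\neq 0\}$. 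If $D\not\equiv 0$ this already forces the stated conclusion on that set, which is the natural domain of $\mathcal R$; and by continuity (both sides are continuous where defined, with the convention $\mathcal R=0$ when $\Pi_v=0$, i.e. when $D(v)=0$) one checks the identity extends as claimed. The degenerate case $D\equiv 0$ means $\Pi_v=0$ for all $v$, whence $N\equiv 0$ too and $\Delta=0$ works trivially.

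\medskip
\textbf{($\Rightarrow$)} Conversely, suppose a symmetric $\Delta$ satisfies (*). Again assume $D\not\equiv0$ (otherwise $\mathcal R\equiv 0$ and $N\equiv 0$, so $Q=v^\top\Delta v$ must be $0$ and the polynomial identity $0=0\cdot D$ holds). On the dense open set $U=\{D\neq 0\}$ we have $v^\top\Delta v = N(v)^2/D(v)$, hence the polynomial identity
\[
\big(v^\top\Delta v\big)\,D(v) \;=\; N(v)^2
\]
holds on $U$. Two polynomials agreeing on a nonempty Euclidean-open subset of $\R^d$ are identically equal (a standard Zariski-density / analyticity argument: their difference vanishes on $U$, hence vanishes identically). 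Therefore $N(v)^2 = Q(v)D(v)$ as polynomials with $Q(v):=v^\top\Delta v$ homogeneous of degree $2$. In particular $D$ divides $N^2$ in $\R[v]$ and the quotient is this quadratic $Q$, which is exactly the divisibility statement. Degree bookkeeping is automatically consistent: $\deg N^2=6$, $\deg D=4$, $\deg Q=2$.

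\medskip
The only genuinely delicate point is the passage from ``equality of two rational/continuous expressions on the locus where the denominator is nonzero'' to ``equality of polynomials'', and dually the handling of the convention that the correction term is $0$ when $\Pi_v=0$. I would isolate this as the heart of the argument: once one observes that $\{D\neq 0\}$ is either empty or Euclidean-open and dense (since $D$ is a nonzero polynomial), clearing denominators is legitimate and the polynomial identity principle closes both directions. The remaining manipulations — the correspondence between homogeneous quadratics and symmetric matrices, and the degree count — are routine and I would state them without belaboring.
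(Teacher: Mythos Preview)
Your proof is correct and follows essentially the same approach as the paper: clear denominators to convert (*) into a polynomial identity, then use the correspondence between homogeneous quadratics and symmetric matrices. You are simply more careful than the paper about justifying the passage from an identity on $\{D\neq 0\}$ to a polynomial identity and about the degenerate case $D\equiv 0$, but the core argument is identical.
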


\begin{proof}
\(\Rightarrow\) If such a \(\Delta\) exists, multiplying both sides of (*) by \(D(v)\) yields:
\[
N(v)^2 - (v^\top \Delta v) D(v) \equiv 0.
\]
The left-hand side is a homogeneous polynomial of degree \(6\). For this to vanish identically, \(D(v)\) must be a factor of \(N(v)^2\), and the resulting quotient must be the quadratic form \(v^\top \Delta v\).

\(\Leftarrow\) Conversely, if \(N^2 = Q \cdot D\) for some homogeneous quadratic \(Q(v)\), then since every quadratic form corresponds to a unique symmetric matrix, we can write \(Q(v) = v^\top \Delta v\). This matrix \(\Delta\) then satisfies \((*)\) by construction.
\end{proof}

\begin{remark}
The polynomial identity \(N^2 - (v^\top\Delta v)D \equiv 0\) represents a system of linear equations, one for each of the \(\binom{d+5}{6}\) possible monomials of degree 6. The number of unknowns is the number of independent entries in \(\Delta\), which is \(\frac{d(d+1)}{2}\). For \(d \ge 2\), the number of equations significantly exceeds the number of variables (e.g., for \(d=2\), we have 7 equations for 3 variables). Generically, this system is overdetermined, implying that an exact matrix representation only exists when the second fundamental form \(\{\Pi_{ij}\}\) and error pairings \(\{\widetilde Z^{(p)}\}\) possess specific symmetries.
\end{remark}

\begin{remark}
\label{rem:mat}
A sufficient condition for the existence of an exact matrix correction \(\Delta\) occurs when the normal components of the second fundamental form span a one-dimensional subspace. Specifically, if there exists a vector \(\phi \in \mathcal{T}_1^\perp\) and a homogeneous quadratic scalar \(h(v)\) such that \(\Pi_v = h(v)\phi\) for all \(v\), and the pairings \(\langle \widetilde Z^{(p)}, \phi \rangle = a_p\) are constant, then:
\[
\mathcal{R}(v) = \frac{\left(\sum_p v_p a_p\right)^2 h(v)^2}{h(v)^2 \|\phi\|^2} = v^\top \left(\frac{a a^\top}{\|\phi\|^2}\right) v,
\]
where \(a = (a_1, \dots, a_d)^\top\). In this isotropic case, the exact PSD matrix correction is \(\Delta = \frac{a a^\top}{\|\phi\|^2}\).
\end{remark}

\subsection{A conservative matrix correction} 
As discussed, an exact matrix representation is not generally possible. The directional curvature correction from Theorem~\ref{thm:dir} provides:
\[
v^\top(\Sigma-J^{-1})v \;\ge\; \mathcal R(v) = \frac{\langle Z_v,\Pi_v\rangle^2}{\|\Pi_v\|^2}.
\]
To obtain a matrix-valued bound that is valid across the entire parameter space, we seek a symmetric PSD matrix $\Delta \in \mathbb{R}^{d\times d}$ such that $v^\top(\Sigma-J^{-1})v \ge \mathcal{R}(v)\geq v^\top \Delta v$ for all $v \in \mathbb{R}^d$, so that \(\Delta\) is faithful to the directional measure of estimator inefficiency captured in \(\mathcal{R}(v)\). This is achieved by finding the ``largest" matrix ellipse that remains entirely contained within the directional structure defined by \(\mathcal{R}(v)\).

\subsection*{Indexing and preparatory definitions}
Let $\mathcal I = \{(i,j) : 1 \le i \le j \le d\}$ index the symmetric pairs of indices, with $m := |\mathcal I| = \frac{d(d+1)}{2}$. For each $\alpha \in \mathcal I$, let $\Pi_\alpha$ denote the corresponding normal vector $\Pi_{ij}$. We define the normal Gramian matrix \(G_{\mathcal{N}} \in \mathbb{R}^{m\times m}\) as:
\[
(G_{\mathcal{N}})_{\alpha\beta} = \ip{\Pi_\alpha}{\Pi_\beta}.
\]
For each estimator component $p \in \{1, \dots, d\}$, we define the projection coefficients onto the normal vectors:
\[
c_{p,\alpha} := \ip{\widetilde Z^{(p)}}{\Pi_\alpha}.
\]
These are collected into a matrix $C \in \mathbb{R}^{d\times m}$ where $C_{p,\alpha} = c_{p,\alpha}$. We further define a vector of quadratic monomials $s(v) \in \mathbb{R}^{m}$ and a linear functional $d(v) \in \mathbb{R}^m$ as:
\[
s_\alpha(v) := \widetilde v_i \widetilde v_j, \qquad d(v) := C^\top v.
\]
Using these definitions, the relevant directional quantities are expressed as:
\[
\langle Z_v,\Pi_v\rangle = d(v)^\top s(v), \qquad \|\Pi_v\|^2 = s(v)^\top G_{\mathcal{N}} s(v).
\]
Consequently, the directional lower bound is compactly represented as the ratio
\[
\mathcal R(v) = \frac{(d(v)^\top s(v))^2}{s(v)^\top G_{\mathcal{N}} s(v)}.
\]

\subsection*{Goal} 
We seek a symmetric PSD matrix $\Delta \in \mathbb{R}^{d\times d}$ such that the following inequality holds for all directions $v \in \mathbb{R}^d$:
\begin{equation}\label{eq:goal}
v^\top\Delta v \;\le\; \frac{(d(v)^\top s(v))^2}{s(v)^\top G_{\mathcal{N}} s(v)}.
\end{equation}
Consistent with Theorem~\ref{thm:dir}, we interpret the right-hand side (RHS) as $0$ whenever the denominator $s(v)^\top G_{\mathcal{N}} s(v) = \|\Pi_v\|^2$ vanishes. If a matrix $\Delta$ satisfying \eqref{eq:goal} is found, the directional curvature bound immediately implies $v^\top(\Sigma - J^{-1} - \Delta)v \ge 0$, which yields the matrix inequality $\Sigma \succeq J^{-1} + \Delta$.

\subsection*{Polynomial (SOS) reformulation}
To avoid the complexities of direct rational optimization, we multiply both sides of \eqref{eq:goal} by the non-negative scalar denominator $s(v)^\top G_{\mathcal{N}} s(v)$. This yields the homogeneous polynomial inequality:
\[
P_\Delta(v) \;:=\; (d(v)^\top s(v))^2 - (v^\top\Delta v)\cdot (s(v)^\top G_{\mathcal{N}} s(v)) \;\ge\; 0, \quad \forall v \in \mathbb{R}^d.
\]
As we already saw in Section \ref{sec:dm}, $P_\Delta$ is a homogeneous polynomial of degree $6$. While checking the non-negativity of a general multivariate polynomial is NP-hard, a sufficient condition is that $P_\Delta(v)$ admits a SOS decomposition. The SOS condition is equivalent to the existence of a symmetric PSD matrix $S \succeq 0$ (the Gram matrix of the SOS form) such that:
\[
P_\Delta(v) = z(v)^\top S z(v),
\]
where $z(v)$ is the vector of all monomials in $v_1, \dots, v_d$ of degree exactly $3$. The length of this basis vector is $M = \binom{d+3-1}{3} = \binom{d+2}{3}$. 

Expanding both sides of the identity $P_\Delta(v) = z(v)^\top S z(v)$ allows us to equate the coefficients of corresponding monomials. Since $P_\Delta$ is linear in the entries of $\Delta$ and the RHS is linear in the entries of $S$, the resulting constraints are linear equalities.

\subsection*{SDP (SOS) problem} 
The search for the optimal conservative matrix correction can be formulated as a semidefinite program:
\[
\begin{aligned}
&\text{\textbf{Variables:}} && \text{Symmetric matrices } \Delta \in \mathbb{R}^{d\times d}, \; S \in \mathbb{R}^{M\times M},\\
&\text{\textbf{Maximize:}} && \Phi(\Delta) \quad (\text{e.g., } \operatorname{Tr}(\Delta)),\\
&\text{\textbf{Subject to:}} && S \succeq 0, \quad \Delta \succeq 0, \\
& && P_\Delta(v) \equiv z(v)^\top S z(v) \,(\text{coefficient matching for all monomials of degree 6}).
\end{aligned}
\tag{SOS-SDP}
\]
The choice of objective function $\Phi(\Delta)$ typically targets the ``largest" lower bound, such as the trace (sum of eigenvalues) or the log-determinant.

\subsection*{Correctness and proof of matrix bound}
\begin{theorem}[SDP certificate $\Rightarrow$ matrix bound]
\label{thm:sdp}
Let $\Delta \succeq 0$ and $S \succeq 0$ be feasible solutions for (SOS-SDP). Then, for every direction $v \in \mathbb{R}^d$:
\[
v^\top\Delta v \le v^\top(\Sigma-J^{-1})v.
\]
Consequently, $\Sigma \succeq J^{-1} + \Delta$.
\end{theorem}

\begin{proof}
By the feasibility of $S \succeq 0$, the polynomial $P_\Delta(v) = z(v)^\top S z(v)$ is non-negative for all $v$. By the definition of $P_\Delta(v)$:
\[
(d(v)^\top s(v))^2 \ge (v^\top\Delta v)(s(v)^\top G_{\mathcal{N}} s(v)).
\]
If $\|\Pi_v\|^2 = s(v)^\top G_{\mathcal{N}} s(v) > 0$, we may divide by the denominator to obtain:
\[
v^\top\Delta v \le \frac{(d(v)^\top s(v))^2}{s(v)^\top G_{\mathcal{N}} s(v)} = \mathcal{R}(v).
\]
By Theorem~\ref{thm:dir}, $\mathcal{R}(v) \le v^\top(\Sigma-J^{-1})v$, establishing the directional inequality. In the case where $s(v)^\top G_{\mathcal{N}} s(v) = 0$, we have $\Pi_v = 0$ and the directional bound \(\mathcal{R}(v)\) is $0$. Since $v^\top(\Sigma-J^{-1})v \ge 0$ (by Proposition~\ref{prop:proj_ineq}), the directional inequality holds trivially. Thus $\Sigma - J^{-1} \succeq \Delta$ as a matrix inequality.
\end{proof}

\begin{remark}[The Isotropic Case]
Consider a $d=2$ system where curvature is restricted to a single normal direction. Let $\tilde v = G^{-1} v$ and define:
\[
G_{\mathcal{N}} = c \begin{pmatrix} 1 & 0 & 0 \\ 0 & 0 & 0 \\ 0 & 0 & 0 \end{pmatrix}, \quad C^\top = \begin{pmatrix} a & b \\ 0 & 0 \\ 0 & 0 \end{pmatrix},
\]
for $c>0$. Here, $N(v) = (av_1 + bv_2)\tilde{v}_1^2$ and $D(v) = c\tilde{v}_1^4$. The directional bound becomes:
\[
\mathcal{R}(v) = \frac{(av_1 + bv_2)^2 \tilde{v}_1^4}{c \tilde{v}_1^4} = \frac{(av_1 + bv_2)^2}{c}.
\]

Since $\mathcal{R}(v)$ is a quadratic form, we have an exact matrix representation $\Delta = \frac{1}{c} \begin{pmatrix} a \\ b \end{pmatrix} \begin{pmatrix} a & b \end{pmatrix}$. In this scenario, $P_\Delta(v) \equiv 0$, and the (SOS-SDP) returns the exact matrix with the trivial certificate $S=0$. This demonstrates that the SOS-SDP framework recovers classical results, such as the Bhattacharyya-type bounds, whenever the manifold geometry is sufficiently symmetric.
\end{remark}

\section{Higher-order jet space refinements}
\label{sec:ho}
To further refine the variance bound, we consider the higher-order extrinsic geometry of the embedding $s(\Theta)$. This generalizes the second-order analysis by incorporating the local geometry of the $m$-th order jet space, allowing for a nested sequence of increasingly tight lower bounds. For a multi-index $\alpha = (\alpha_1, \dots, \alpha_d) \in \mathbb{N}_0^d$, recall the notation of the higher-order partial derivatives $\eta_\alpha = \partial^\alpha s(\theta) \in H$.
Also recall that the $m$-th order jet subspace $\mathcal{T}_m(\theta) \subset H$ is the span of all partial derivatives of the embedding up to total order $m$:
\[
\mathcal{T}_m = \operatorname{span} \{ \eta_\alpha : 1 \le |\alpha| \le m \}.
\]

\begin{definition}
We define the $m$-th order Bhattacharyya-type matrix bound $B^{(m)}$ by the inner products of the projections of the lifted errors onto this space:
\[
B^{(m)}_{pq} := \langle \operatorname{Proj}_{\mathcal{T}_m} \widetilde{Z}^{(p)}, \operatorname{Proj}_{\mathcal{T}_m} \widetilde{Z}^{(q)} \rangle.
\]
\end{definition}

By the nesting property $\mathcal{T}_1 \subset \mathcal{T}_2 \subset \dots \subset \mathcal{T}_m$, it follows immediately that $B^{(m+1)} \succeq B^{(m)}$, where $B^{(1)} = J^{-1}$ is the classical CRB. This sequence of matrices provides a discrete approximation to the total variance available in the higher-order score functions. We now extend the directional inequality in Theorem \ref{thm:dir} using a generalization of the second fundamental form.

\begin{definition}[Order-$(m+1)$ Directional Curvature]
For a direction $v \in \mathbb{R}^d$, let $\partial_v = \sum_{i=1}^d \tilde{v}_i \partial_i$ be the directional derivative operator where $\tilde{v} = G^{-1}v$. The $(m+1)$-th order directional curvature vector $\Pi_v^{(m+1)} \in \mathcal{T}_m^\perp$ is defined as the orthogonal projection of the $(m+1)$-th order directional derivative onto the complement of the previous jet space:
\[
\Pi_v^{(m+1)} := \operatorname{Proj}_{\mathcal{T}_m^\perp} \left( \partial_v^{m+1} s(\theta) \right).
\]
\end{definition}

\begin{theorem}[Higher-order directional curvature bound]
For every direction $v \in \mathbb{R}^d$ and jet order $m \ge 1$, under regularity in Assumption \ref{assump:smooth}, the estimator covariance $\Sigma$ satisfies:
\begin{equation}
\label{eq:high_order_dir}
v^\top (\Sigma - B^{(m)}) v \ge \frac{\langle Z_v, \Pi_v^{(m+1)} \rangle^2}{\|\Pi_v^{(m+1)}\|^2}.
\end{equation}
Equality holds if and only if the normal component of the lifted error $Z_v$ relative to $\mathcal{T}_m$ is collinear with $\Pi_v^{(m+1)}$.
\label{thm:hor}
\end{theorem}

\begin{proof}
Consider the orthogonal decomposition of the combined lifted error $Z_v$ with respect to $\mathcal{T}_m$:
\[
Z_v = \operatorname{Proj}_{\mathcal{T}_m} Z_v + R_m, \quad R_m \in \mathcal{T}_m^\perp.
\]
By the Pythagorean theorem in $H$, $\|Z_v\|^2 = \|\operatorname{Proj}_{\mathcal{T}_m} Z_v\|^2 + \|R_m\|^2$. From the definition of $B^{(m)}$, we recognize that $v^\top B^{(m)} v = \|\operatorname{Proj}_{\mathcal{T}_m} Z_v\|^2$. Thus, the remaining variance inflation is exactly $\|R_m\|^2$. 

Since $\Pi_v^{(m+1)}$ lies in $\mathcal{T}_m^\perp$ by definition, the Cauchy-Schwarz inequality in the subspace $\mathcal{T}_m^\perp$ implies:
\[
\|R_m\|^2 \ge \frac{\langle R_m, \Pi_v^{(m+1)} \rangle^2}{\|\Pi_v^{(m+1)}\|^2} = \frac{\langle Z_v, \Pi_v^{(m+1)} \rangle^2}{\|\Pi_v^{(m+1)}\|^2}\quad (=:\mathcal{R}_m(v)),
\]
where the final equality follows because $\operatorname{Proj}_{\mathcal{T}_m} Z_v \perp \Pi_v^{(m+1)}$.
\end{proof}

For \(m\geq 1\), $\Sigma\succeq B^{(m+1)} \succeq B^{(m)}$, and therefore we have the matrix correction \(\Sigma-B^{(m)}\succeq \Delta^{(m)}_{\mathrm{B}}\), where \(\Delta^{(m)}_{\mathrm{B}}:= B^{(m+1)} - B^{(m)}\). Let $\mathcal{N}_m = \mathcal{T}_{m+1} \cap \mathcal{T}_m^\perp$ be the $(m+1)$-th order normal space. The matrix-valued correction $\Delta^{(m)}_{\mathrm{B}} = B^{(m+1)} - B^{(m)}$ is calculated as:
\[
\Delta^{(m)}_{\mathrm{B}} = C_m G_{\mathcal{N},m}^{-1} C_m^\top,
\]
where $C_m$ is the matrix of pairings $c_{p,\alpha} = \langle \widetilde{Z}^{(p)}, \eta_\alpha \rangle$ for $\eta_\alpha \in \mathcal{N}_m$, and $G_{\mathcal{N},m}$ is the Gramian matrix of the basis for $\mathcal{N}_m$.

Although both \(v^\top \Delta^{(m)}_{\mathrm{B}}v\) and \(\mathcal{R}_m(v)\) (which is our bound in \eqref{eq:high_order_dir}) are homogeneous degree-2, they are generically different objects with \(\mathcal{R}_m(v)\) being a rational function. Furthermore, the bound in \eqref{eq:high_order_dir} respects the local directional geometry of the statistical manifold, whereas the global correction \(\Delta^{(m)}_{\mathrm{B}}\) does not. If we desire a matrix-valued correction that is consistent with the local extrinsic geometry, we may use the SOS-SDP formalism developed in Section \ref{sec:mc} for the \(m=1\) case.

\begin{remark}[SOS-SDP Extension]
The conservative matrix correction developed for $m=1$ generalizes to higher orders. For order $m$, the directional curvature $\Pi_v^{(m+1)}$ is a homogeneous polynomial in $v$ of degree $m+1$. Consequently, the directional bound $\mathcal{R}_m(v):=\frac{N_m(v)^2}{D_m(v)}$ is a rational function of degree $2(m+2) - 2(m+1) = 2$. A certified matrix $\Delta_m$ is then obtained via (SOS-SDP) by matching coefficients of the degree $2(m+2)$ polynomial $P_{\Delta, m}(v) = N_m(v)^2 - (v^\top \Delta_m v) D_m(v)$.
\end{remark}

\section{Two detailed examples}
\label{sec:ex}
We discuss two examples, illustrating different manifold geometries, aimed at comparing the directional bound \(\mathcal{R}(v)\), and the two matrix corrections (i) \(\Delta\) from the SOS-SDP framework and (ii) \(\Delta_\mathrm{B}\equiv \Delta^{(1)}_{\mathrm{B}}\).

\begin{example}[Gaussian location with curved third coordinate]
\label{ex:cg}
Consider the model
\[
X\sim N(\mu(\theta),\sigma^2 I_3),\qquad
\mu(\theta)=\begin{pmatrix}\mu_1\\[2pt]\mu_2\\[2pt]\mu_3\end{pmatrix}=\begin{pmatrix}\theta_1\\[2pt]\theta_2\\[2pt]\alpha\theta_1^2\end{pmatrix},\qquad
\theta=(\theta_1,\theta_2)\in\R^2,\ \alpha\neq0,
\]
with curvature correction evaluated at the convenient point \(\theta_1=0\). 

\noindent
\textbf{Square-root embedding and tangent vectors.}
The square-root embedding is
\[
s(\theta)(x)=(2\pi\sigma^2)^{-3/4}\exp\!\Big(-\frac{\|x-\mu(\theta)\|^2}{4\sigma^2}\Big).
\]
Use notation
\[
\mu_{,i}:=\begin{pmatrix}\tfrac{\partial\mu_1}{\partial \theta_i}\\[2pt]\tfrac{\partial\mu_2}{\partial \theta_i}\\[2pt]\tfrac{\partial\mu_3}{\partial \theta_i}\end{pmatrix},\qquad \mu_{,ij}:=\begin{pmatrix}\tfrac{\partial^2\mu_1}{\partial \theta_i \partial \theta_j}\\[2pt]\tfrac{\partial^2\mu_2}{\partial \theta_i \partial \theta_j}\\[2pt]\tfrac{\partial^2\mu_3}{\partial \theta_i \partial \theta_j}\end{pmatrix} .
\]
We have
\[
\eta_i(\theta)=\partial_i s(\theta)=\tfrac12 s(\theta)Y_i,\qquad
Y_i=\partial_i\log f.
\]
Then
\[
Y_i(x;\theta)=\frac{1}{\sigma^2}(x-\mu(\theta))\cdot\mu_{,i}(\theta),
\]
and it is convenient to set
\[
A_i(x):=\frac{1}{2\sigma^2}(x-\mu)\cdot\mu_{,i},
\]
so \( \eta_i(x)=s(x)A_i(x) \).

At \(\theta_1=0\), we have
\[
\mu_{,1}=(1,0,0)^\top,\quad \mu_{,2}=(0,1,0)^\top,\quad \mu_{,11}=(0,0,2\alpha)^\top.
\]
Hence
\[
J_{ij}=\frac{1}{\sigma^2}\mu_{,i}\cdot\mu_{,j}=\sigma^{-2}\delta_{ij},
\]
so \(J=\sigma^{-2}I_2,\ J^{-1}=\sigma^2 I_2\), and \(G=\tfrac14J,\ G^{-1}=4\sigma^2 I_2\).

\noindent
\textbf{Second derivatives and normal vectors.}
Differentiate
\[
\partial_i A_j(x)=\frac{1}{2\sigma^2}\big(-\mu_{,i}\cdot\mu_{,j} + (x-\mu)\cdot\mu_{,ij}\big),
\]
so
\[
\partial_i\partial_j s = s\big(A_iA_j + \partial_i A_j\big).
\]
At \(\theta_1=0\), we have \(\mu_{,12}=\mu_{,22}=0\) and only \(\mu_{,11}\) nonzero; the Christoffel inner products
\[
\langle\partial_i\partial_j s,\eta_m\rangle
= \frac{1}{4\sigma^2}\,\mu_{,ij}\cdot\mu_{,m}
\]
therefore vanish and we get \(\Gamma^\ell_{ij}=0\) at \(\theta_1=0\). Consequently, the second fundamental form vectors equal the second derivatives at this point:
\[
\Pi_{ij}=\partial_i\partial_j s \qquad i,j\in\{1,2\}.
\]

Compute these explicitly (write \(U=X_1-\mu_1,\ V=X_2-\mu_2,\ W=X_3-\mu_3\)):

\[
\begin{aligned}
A_1 &= \frac{U}{2\sigma^2},\qquad
A_2 = \frac{V}{2\sigma^2},\\[4pt]
\partial_1 A_1 &= \frac{1}{2\sigma^2}\big(-1 + 2\alpha W\big),\qquad
\partial_1 A_2 = 0,\qquad \partial_2 A_2 = -\frac{1}{2\sigma^2}.
\end{aligned}
\]

Hence the normal vectors are:
\[
\begin{aligned}
\Pi_{11}(x) &= s(x)\Big(\frac{U^2}{4\sigma^4} + \frac{1}{2\sigma^2}(-1 + 2\alpha W)\Big),\\[4pt]
\Pi_{12}(x) &= s(x)\frac{UV}{4\sigma^4},\\[4pt]
\Pi_{22}(x) &= s(x)\Big(\frac{V^2}{4\sigma^4} - \frac{1}{2\sigma^2}\Big).
\end{aligned}
\]

\noindent
\textbf{Estimator and lifted errors.}
Choose the unbiased estimator
\[
T^{(1)}(X)=X_1,\qquad T^{(2)}(X)=X_2+\gamma(X_3-\mu_3),
\qquad \gamma\in\R,
\]
so the lifted (centered) errors are
\[
\widetilde Z^{(1)}=(X_1-\theta_1)s = U s,\qquad
\widetilde Z^{(2)}=(V+\gamma W)s.
\]

\noindent
\textbf{Numerator and denominator in curvature correction term.}
We compute all pairings needed for the numerator in the curvature correction in Theorem \ref{thm:dir}. Using the basic facts that odd moments vanish and independent coordinates factor, we obtain:
\[
\begin{aligned}
\langle\widetilde Z^{(1)},\Pi_{11}\rangle
&= \E\Big[U\Big(\frac{U^2}{4\sigma^4}+\frac{1}{2\sigma^2}(-1+2\alpha W)\Big)\Big] = 0,\\[4pt]
\langle\widetilde Z^{(2)},\Pi_{11}\rangle
&= \E\Big[(V+\gamma W)\Big(\frac{U^2}{4\sigma^4}+\frac{1}{2\sigma^2}(-1+2\alpha W)\Big)\Big]
= \gamma\alpha,
\end{aligned}
\]
because the only nonvanishing contribution is \(\gamma\cdot(2\alpha/(2\sigma^2))\E[W^2]=\gamma\alpha\).

For \(\Pi_{12}\), it is easily seen that
\[
\langle\widetilde Z^{(p)},\Pi_{12}\rangle
=  0
\quad\text{for }p=1,2,
\]
by oddness/independence.

For \(\Pi_{22}\):
\[
\langle\widetilde Z^{(1)},\Pi_{22}\rangle
= \E\big[U(\tfrac{V^2}{4\sigma^4}-\tfrac{1}{2\sigma^2})\big] = 0,
\]
and
\[
\langle\widetilde Z^{(2)},\Pi_{22}\rangle
= \E\big[(V+\gamma W)(\tfrac{V^2}{4\sigma^4}-\tfrac{1}{2\sigma^2})\big] = 0,
\]
again by oddness and independence.

For this estimator the only nonzero pairing is
\[
\langle\widetilde Z^{(2)},\Pi_{11}\rangle = \gamma\alpha,
\]
hence in 
\[
\langle Z_v,\Pi_v\rangle
= \sum_{p=1}^2 v_p \sum_{i,j=1}^2 \widetilde v_i\widetilde v_j \langle\widetilde Z^{(p)},\Pi_{ij}\rangle,
\]
only the term with \(p=2\) and \((i,j)=(1,1)\) contributes. Therefore
\[
\langle Z_v,\Pi_v\rangle \;=\; v_2\,\widetilde v_1^2\,(\gamma\alpha).
\]

Thus the numerator equals \((v_2\widetilde v_1^2\gamma\alpha)^2\).

Although \(\Pi_{12},\Pi_{22}\) do not enter the numerator, they \emph{do} affect the denominator \(\|\Pi_v\|^2\).
It is not hard to see that
\[
\begin{aligned}
\|\Pi_{11}\|^2 &= \E\big[(A_1^2+\partial_1 A_1)^2\big]
= \frac{3}{16\sigma^4} + \frac{\alpha^2}{\sigma^2},\\[6pt]
\|\Pi_{12}\|^2 &= \E[(A_1A_2)^2] = \frac{1}{16\sigma^4},\\[6pt]
\|\Pi_{22}\|^2 &= \E\big[(A_2^2+\partial_2 A_2)^2\big] = \frac{3}{16\sigma^4},
\end{aligned}
\]
and the nonzero cross-term
\[
\langle\Pi_{11},\Pi_{22}\rangle
= \frac{1}{16\sigma^4},
\]
whereas \(\langle\Pi_{11},\Pi_{12}\rangle=\langle\Pi_{12},\Pi_{22}\rangle=0\).  

By definition,
\[
\Pi_v = \sum_{i,j=1}^2 \widetilde v_i\widetilde v_j\,\Pi_{ij},
\qquad \widetilde v = G^{-1} v = 4\sigma^2 v,
\]
and the coordinates of \(\Pi_v\) in the basis \((\Pi_{11},\Pi_{12},\Pi_{22})\) are
\[
\big(\widetilde v_1^2,\; 2\widetilde v_1\widetilde v_2,\; \widetilde v_2^2\big).
\]

Thus we get the closed form
\[
\|\Pi_v\|^2
= \frac{3}{16\sigma^4}(\widetilde v_1^2+\widetilde v_2^2)^2
+ \frac{\alpha^2}{\sigma^2}\,\widetilde v_1^4.
\]

\noindent
\textbf{Final directional bound.}
The directional curvature-corrected CRB (Theorem~\ref{thm:dir}) gives:
\[
v^\top(\Sigma-J^{-1})v \;\ge\; 
\frac{\langle Z_v,\Pi_v\rangle^2}{\|\Pi_v\|^2}
= \frac{\big(v_2\,\widetilde v_1^2\,\gamma\alpha\big)^2}{\|\Pi_v\|^2}.
\]

Substitute \(\widetilde v_i=4\sigma^2 v_i\). After canceling the common factor \(16\sigma^4\), the bound simplifies to the compact rational form
\[
v^\top(\Sigma-J^{-1})v \;\ge\;
\frac{16\sigma^4\,v_2^2\,v_1^4\,\gamma^2\alpha^2}
{\,3(v_1^2+v_2^2)^2 + 16\sigma^2\alpha^2 v_1^4\, }.
\]
\end{example}

It is instructive to dig deeper into the geometric aspects of the Gaussian example with further evaluations to assess how finer directional information makes \(\mathcal{R}(v)\) and the SOS-SDP correction different from classical matrix-level refinements such as \(\Delta_{\mathrm{B}}\).

\subsection{Numerical evaluation and the geometry of the curvature bound}

We investigate the tightness of our proposed curvature corrections using the curved Gaussian location model just analyzed. The model parameters are fixed at $\sigma^2 = 1$ and $\alpha = 0.5$, with the remaining factors as in Example \ref{ex:cg}. Recall the concrete estimator we considered:
\[
T^{(1)}(X) = X_1, \quad T^{(2)}(X) = X_2 + \gamma(X_3 - \mu_3),
\]
with the coupling parameter $\gamma = 1.5$. This estimator is specifically designed to interact with the manifold's extrinsic curvature in the $X_3$ dimension.

\noindent
\textbf{The directional bound and the pinching effect.}
The directional curvature-corrected bound, $\mathcal{R}(v)$, represents the local geometric limit of estimator efficiency. As derived in the example, the bound for this model takes a specific rational form:
\[
\mathcal{R}(v) = \frac{16\sigma^4 v_2^2 v_1^4 \gamma^2 \alpha^2}{3(v_1^2 + v_2^2)^2 + 16\sigma^2 \alpha^2 v_1^4}.
\]
A critical feature of this bound is revealed upon inspection of the coordinate axes. Due to the terms $v_2^2$ and $v_1^4$ in the numerator, the bound vanishes exactly when $v_1 = 0$ or $v_2 = 0$. Along the principal axes, the vanishing of $\mathcal{R}(v)$ indicates that the manifold's extrinsic curvature does not impose a second-order variance penalty, allowing the estimator to saturate the first-order Fisher limit. Conversely, in oblique directions, the manifold's `twist' induces a non-zero $\mathcal{R}(v)$; here, any deviation of the estimator's variance from $J^{-1} + \mathcal{R}(v)$ represents a measure of second-order inefficiency. Importantly, the vanishing of \(\mathcal{R}(v)\) along the axes creates a ``pinched'' clover-leaf structure in the polar representation of the variance, a phenomenon we term the \textit{pinching effect} (cf. Figure \ref{fig:pinching}).

\noindent
\textbf{SOS-SDP matrix correction.}
We seek a symmetric PSD matrix $\Delta$ that provides a global certificate for the variance gap, satisfying the matrix inequality $\Sigma \succeq J^{-1} + \Delta$ while also respecting the local manifold geometry; that is, $v^\top \Delta v \le \mathcal{R}(v)$ for all $v \in \mathbb{R}^d$. To achieve this, one may solve the (SOS-SDP) problem by matching the coefficients of the degree-6 polynomial:
\[
P_\Delta(v) := \mathcal{R}(v)\|\Pi_v\|^2 - (v^\top \Delta v)\|\Pi_v\|^2 \ge 0.
\]
Numerically, for the specified Gaussian model, the (SOS-SDP) returns $\Delta \approx 0$. This result, while seemingly conservative, is mathematically necessary. Because $\mathcal{R}(v)$ pinches to zero along the axes, any quadratic form $v^\top \Delta v$ (which appears as an ellipse in polar coordinates) that remains globally ``underneath'' $\mathcal{R}(v)$ must itself have zero eigenvalues. This is a consequence of the fact that (SOS-SDP) looks for the largest matrix $\Delta$ that represents the universal floor defined by the geometry of the manifold, respecting the fact that this floor drops to the CRB on the axes. This highlights a fundamental finding: for curved models, the directional bound $\mathcal{R}(v)$ could be a strictly more informative tool than any global matrix-form bound.

\begin{figure}[ht]
    \centering
    \includegraphics[width=0.6\textwidth]{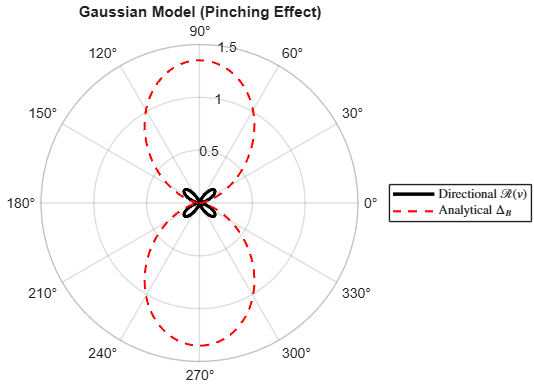}
    \caption{The ``pinching effect" in the Gaussian location model. The directional bound $\mathcal{R}(v)$ (black solid) vanishes along the coordinate axes, revealing the non-quadratic nature of the curvature correction. The classical analytical $\Delta_\mathrm{B}$ matrix (red dashed) violates the directional bound in these regions, demonstrating that a global matrix correction can be overly optimistic. This necessitates the use of our SOS-SDP approach to find a truly conservative, certified matrix $\Delta$.}
    \label{fig:pinching}
\end{figure}

\noindent
\textbf{Comparison with the analytical $\Delta_{\mathrm{B}}$ matrix.}
The classical second-order Bhattacharyya matrix $B^{(2)}$ provides a statistical benchmark, representing the variance captured by the second-order jet space $\mathcal{T}_2$. For our Gaussian model, we consider the global matrix correction $\Delta^{(1)}_{\mathrm{B}}\equiv\Delta_{\mathrm{B}} = B^{(2)} - B^{(1)}$ defined in Section \ref{sec:ho}. Since the Christoffel symbols $\Gamma_{ij}^k$ vanish at the test point, the second derivatives $\eta_{ij} = \partial_i \partial_j s(\theta)$ are already orthogonal to the tangent space $\mathcal{T}_1$. This allows us to compute the projection onto the normal space $\mathcal{N}_1 = \operatorname{span}\{\eta_{11}, \eta_{12}, \eta_{22}\}$ directly.

The matrix correction is computed as $\Delta_{\mathrm{B}} = C G_{\mathcal{N}}^{-1} C^\top$, where $C$ is the matrix of pairings $c_{p,\alpha} = \langle \widetilde{Z}^{(p)}, \eta_\alpha \rangle$. For the specified estimator $T$, the only non-vanishing pairing is $c_{2,11} = \gamma\alpha$, as already seen in Example \ref{ex:cg}. The normal Gramian $G_{\mathcal{N}}$, representing the metric of the second-order scores, is given by:
\[
G_{\mathcal{N}} = \begin{pmatrix} \frac{3}{16\sigma^4} + \frac{\alpha^2}{\sigma^2} & 0 & \frac{1}{16\sigma^4} \\ 0 & \frac{2}{16\sigma^4} & 0 \\ \frac{1}{16\sigma^4} & 0 & \frac{3}{16\sigma^4} \end{pmatrix}.
\]
By inverting $G_{\mathcal{N}}$, we find the second-order variance inflation matrix:
\[
\Delta_{\mathrm{B}} = \begin{pmatrix} 0 & 0 \\ 0 & \frac{6\sigma^4 \gamma^2 \alpha^2}{1 + 6\sigma^2 \alpha^2} \end{pmatrix}.
\]
For the parameters $\sigma^2=1, \alpha=0.5, \gamma=1.5$, we obtain the scalar bound $\Delta_{\mathrm{B}, 22} = 1.35$. 

It is instructive to reconcile why this analytical bound is numerically larger than the directional bound. The Bhattacharyya gap represents the squared norm of the estimator error's projection onto the \textit{entire} normal subspace $\mathcal{N}_1$. Letting $\{\mathbf{n}_a\}$ be an orthonormal basis for $\mathcal{N}_1$, we have:
\[
v^\top (\Sigma - B^{(1)}) v \ge v^\top \Delta_{\mathrm{B}} v = \sum_{a} \langle Z_v, \mathbf{n}_a \rangle^2 \ge \frac{\langle Z_v, \Pi_v \rangle^2}{\|\Pi_v\|^2} = \mathcal{R}(v).
\]

The inequality follows from the Bessel inequality in the normal subspace $\mathcal{N}_1$. While $\Delta_{\mathrm{B}}$ provides an algebraically higher lower bound by aggregating all curvature components in the jet space, it remains direction-blind. Mathematically, the matrix gap $v^\top \Delta_{\mathrm{B}} v$ measures the total squared norm of the error's projection onto the three-dimensional subspace $\mathcal{N}_1$. In contrast, $\mathcal{R}(v)$ represents the squared projection onto the specific $1$-dimensional span of $\Pi_v \subseteq \mathcal{N}_1$. To reiterate what we just mentioned:
\[
v^\top \Delta_{\mathrm{B}} v = \|\operatorname{Proj}_{\mathcal{N}_1} Z_v\|^2 \ge \|\operatorname{Proj}_{\operatorname{span}\{\Pi_v\}} Z_v\|^2 = \mathcal{R}(v).
\]

Specifically, along the axis \(v_1=0\), $\mathcal{R}(v)$ vanishes due to the quartic pinching of the error projection in the direction of the curvature vector $\Pi_v$, whereas $\Delta_{\mathrm{B}}$ maintains a constant quadratic contribution. On the other hand, recall what we saw above: Our SOS-SDP identifies that because the clover-leaf $\mathcal{R}(v)$ hits zero at the axes, no quadratic form (ellipse) can globally lower-bound it without its eigenvalues also vanishing. This forces the conservative result $\Delta \approx 0$, revealing that the Bhattacharyya-type bound overestimates the universal variance floor in directions where the manifold's curvature effect is quartically suppressed. The SOS-SDP correctly identifies that no strictly positive-definite matrix can consistently certify the information gap across the entire parameter space; see Table \ref{tab:results}.

\begin{table}[ht]
\centering
\caption{Comparison of Variance Lower Bounds ($\sigma^2=1, \alpha=0.5, \gamma=1.5$)}
\label{tab:results}
\begin{tabular}{@{}lccc@{}}
\toprule
\textbf{Metric} & \textbf{Analytical $\Delta_{\mathrm{B}}$} & \textbf{SDP (Conservative)} & \textbf{Directional $\mathcal{R}(v_{\text{max}})$} \\ \midrule
$\Delta_{11}$ & 0.0000 & 0.0000 & 0.0000 \\
$\Delta_{22}$ & 1.3500 & 0.0000 & 0.14 (at $\phi \approx 55^\circ$) \\
\textbf{Integrity} & Subspace-Optimistic & Rigorously Certified & Locally Tight \\ \bottomrule
\end{tabular}
\end{table}

\noindent
\textbf{Summary of results.}
The discrepancy between $\Delta_{\mathrm{B}}$ and $\mathcal{R}(v)$ demonstrates that the classical second-order matrix bound can be viewed as an isotropic approximation that may over-promise variance inflation in degenerate directions due to the lack of finer directional information. We conclude that the directional curvature bound $\mathcal{R}(v)$ is the more fundamental metric for analyzing nonasymptotic, local efficiency. While $\Delta_{\mathrm{B}}$ is a valid lower bound for the restricted class of estimators linear in the second-order scores, $\mathcal{R}(v)$ captures the true geometric limit imposed by the manifold's extrinsic curvature, which standard quadratic matrix approximations cannot maintain in the presence of the pinching effect. A slightly more technical discussion on the implications of the (spherical) square-root geometry on our directional bounds can be found in Section \ref{sec:syn}.

\noindent
\textbf{Implications for adaptive estimation.}
The identification of the pinching effect has immediate consequences for the design of computationally efficient estimators in curved families. Our analysis suggests that the higher-order efficiency loss is not an isotropic penalty but is instead highly sensitive to the direction of the parameter update. 

Specifically, in update directions along which the directional bound $\mathcal{R}(v)$ vanishes despite non-zero extrinsic curvature, an estimator may achieve first-order efficiency (approaching the CRB) even with a simple linear structure. Conversely, in the oblique directions where $\mathcal{R}(v)$ is maximized, nonlinear curvature correction terms are indispensable. This geometric insight suggests an \textit{adaptive correction} strategy: applying curvature-aware adjustments only in parameter subspaces where the directional sensitivity $\mathcal{R}(v)/\|v\|^2$ exceeds a predefined threshold. This approach preserves the robustness of the estimator while minimizing the computational overhead of higher-order derivative calculations.

\begin{example}[Spherical multinomial model]

To demonstrate a case where the (SOS-SDP) yields a non-trivial, full-rank matrix correction, we consider a multinomial model with three outcomes $\mathcal{X} = \{x_1, x_2, x_3\}$. The square-root embedding $s(\theta) \in L^2(\mathcal{X}, \mu)$ maps to a 2D spherical cap on the unit 2-sphere in $\mathbb{R}^3$, where $\mu$ is the counting measure. Let the embedding be parameterized as:
\[
s(\theta) = \begin{pmatrix} \cos \theta_1 \cos \theta_2 \\ \sin \theta_1 \cos \theta_2 \\ \sin \theta_2 \end{pmatrix}, \quad \theta \in (-\epsilon, \epsilon)^2.
\]
At the point $\theta = (0,0)$, the tangent vectors (score functions) are $\eta_1 = (0, 1, 0)^\top$ and $\eta_2 = (0, 0, 1)^\top$. The FIM is $J_{ij} = 4 \langle \eta_i, \eta_j \rangle = 4 \delta_{ij}$, or $J = 4 I_2$. The normal space $\mathcal{N}_1$ is spanned by the radial unit vector $\mathbf{n} = (-1, 0, 0)^\top$. The second-order geometry is captured by the derivatives $\eta_{ij} := \partial_i \partial_j s(\theta)$. For this spherical embedding, the second fundamental form components (the projections of $\eta_{ij}$ onto $\mathcal{N}_1$) are $\Pi_{11} = \Pi_{22} = \mathbf{n}$ and $\Pi_{12} = 0$.

To realize the variance inflation predicted by the curvature bound, we consider a test error vector $Z_v \in H$ that satisfies the first-order efficiency condition $\langle Z_v, \eta_i \rangle = \frac{1}{2} v_i$, but possesses a non-vanishing projection onto the normal space $\mathcal{N}_1$.\\
\textbf{Remark on unbiasedness}: It is important to note that a strictly unbiased estimator must satisfy $\langle Z_v, s \rangle = 0$; that is, the lifted error is necessarily ``radially blind" to the spherical curvature imposed ny the square-root geometry. In this 3-outcome multinomial model, $\mathcal{N}_1 = \text{span}\{s\}$, meaning no transverse (non-radial) normal directions are available for an unbiased estimator to couple with. Despite this constraint, we utilize this example as a geometric benchmark for isotropic coupling. By ``injecting" a radial component $\gamma$, we test the SOS-SDP's capacity to recover a full-rank matrix correction $\Delta$ when the error is uniformly distributed across the normal bundle. The implications of this spherical constraint and the resulting distinction between radial and transverse curvature are discussed in detail in Section \ref{sec:syn}.

Specifically, the error vector is characterized by a coupling parameter $\gamma$, such that the directional lifted error decomposes as: 
$$Z_v = 2\sum_{j,k=1}^2 v_j [J^{-1}]_{jk} \eta_k + \gamma \frac{\Pi_v}{\|\Pi_v\|},$$ 
where the first term is the unique minimum-norm vector in the tangent space $\mathcal{T}_1$ satisfying the score constraints. By the orthogonality of the jet space decomposition $\mathcal{T}_2 = \mathcal{T}_1 \oplus \mathcal{N}_1$, the total variance in direction $v$ is given by the squared norm:
$$\operatorname{Var}_{\theta}(v^\top T) = \|Z_v\|^2 = \|\operatorname{Proj}_{\mathcal{T}_1} Z_v\|^2 + \gamma^2.$$
Recognizing that the tangent projection norm corresponds to the classical Cramer-Rao limit, $\|\operatorname{Proj}_{\mathcal{T}_1} Z_v\|^2 =  v^\top J^{-1} v$, the directional curvature bound (Theorem \ref{thm:dir}) simplifies to:$$\mathcal{R}(v) = \frac{\langle Z_v, \Pi_v \rangle^2}{\|\Pi_v\|^2} = \frac{(\gamma \|\Pi_v\|)^2}{\|\Pi_v\|^2} = \gamma^2.$$

For the spherical case, the SOS-SDP seeks a matrix $\Delta \succeq 0$ satisfying the global certificate $v^\top \Delta v \le \mathcal{R}(v)$ for all $\|v\|=1$. On the unit sphere of directions, this obviously yields the non-degenerate solution $\Delta = \gamma^2 I_2$. Numerical results confirm that for $\gamma = 1.2$, the certified matrix bound is $\Delta \approx 1.44 I_2$. Moreover, since the unexplained part of the variance is completely along the direction \(\Pi_v\), \(v^\top\Delta_{\mathrm{B}}v\) also coincides with \(\mathcal{R}(v)\) and \(v^\top \Delta v\); see Figure \ref{fig:sphere_isotropic}. In fact, at the test point, \(\mathcal{N}_1=\mathrm{span}\{\Pi_v\}\). This demonstrates that in isotropic geometries, the SOS-certified bound successfully recovers the full-rank curvature correction, contrasting with the degenerate pinching observed in the Gaussian case.
\end{example}

\subsection{Synthesis of results: Degenerate vs. Non-degenerate geometry}
\label{sec:syn}
The contrast between the curved Gaussian location model and the spherical multinomial model illustrates the fundamental role of the coupling between manifold geometry and the estimator in determining the structure and attainability of higher-order variance bounds. For a pictorial comparison of both side-by-side, refer to Figure \ref{fig:comparison}.

In the Gaussian case, the directional bound $\mathcal{R}(v)$ exhibits a pinching effect, vanishing along the principal axes. This represents a highly structured, anisotropic curvature where the extrinsic ``bending" of the manifold does not impede estimation efficiency in all directions equally. Mathematically, the rational quartic nature of $\mathcal{R}(v)$ allows it to touch the CRB at specific points. In such geometries, the SOS-SDP correctly identifies that no non-zero quadratic form (ellipse) can globally lower-bound the variance gap without violating these local (pinched) geometric limits. This forces a distinction between the subspace-optimistic Bhattacharyya difference matrix $\Delta_\mathrm{B}$ and the true geometric limit $\mathcal{R}(v)$, with the latter being the only locally tight certificate.

In contrast, the spherical multinomial model possesses isotropic curvature. While all square-root density embeddings $s(\theta)$ are constrained to the unit sphere in $H$ due to the normalization $\int f d\mu = 1$, the multinomial case represents a symmetric embedding where the manifold bends uniformly relative to the tangent directions.

As illustrated in the results, for the multinomial model, the directional bound $\mathcal{R}(v)$, the certified SOS-SDP matrix $\Delta$, and the analytical second-order Bhattacharyya difference matrix $\Delta_\mathrm{B}$ all coincide exactly. This alignment occurs because the symmetry of the spherical embedding ensures that the ``information gap" is distributed uniformly across all parameter directions. Thus, the SOS-SDP does not merely approximate a bound; it recovers the exact deficit imposed by the spherical embedding.
In other words, this isotropic behavior serves as a geometric benchmark: here, the Hilbert space embedding $s(\Theta)$ exhibits constant extrinsic curvature, and for an estimator with uniform coupling $\gamma$, the SOS-SDP recovers a full-rank correction $\Delta = \gamma^2 I$. The vanishing $\Delta$ observed in our Gaussian model is therefore not a failure of the SOS-SDP, but a rigorous certification of the anisotropic `pinching' inherent to the model's embedding geometry, which coordinate-local bounds like Bhattacharyya's fail to capture. 

\noindent
\textbf{The universal spherical constraint.} The fact that $s(\theta)$ is a radial normal vector to the second-order geometry is a universal property of statistical manifolds. Since $\|s\|^2 = 1$, we differentiate with respect to $\theta_k$:
$$2\langle s, \partial_k s \rangle = 0 \implies s \perp \eta_k.$$
Differentiating again with respect to $\theta_j$ yields:
$$\langle \partial_j s, \partial_k s \rangle + \langle s, \partial_j \partial_k s \rangle = 0.$$
Substituting the Fisher information components $J_{jk} = 4\langle \eta_j, \eta_k \rangle$, we find:
\begin{equation}
\label{eq:iw}
\langle s, \eta_{jk} \rangle = -\frac{1}{4} J_{jk}.
\end{equation}
For the isotropic multinomial model, the second fundamental form $\Pi_{jk}$ is dominated by this radial component. However, the directional bound $\mathcal{R}(v)$ only captures the portion of curvature that the estimator's error $Z_v$ can `see'. 
For any direction $v$, we can write the directional curvature vector as:
$$\Pi_v = \langle \Pi_v, s \rangle s + \Pi_v^{\perp s}.$$
Using the identity we derived in \eqref{eq:iw}, $\langle s, \Pi_v \rangle = -\frac{1}{4} v^\top J v$. Thus:
$$\Pi_v = \underbrace{-\frac{1}{4} (v^\top J v) s}_{\text{Radial/Spherical}} + \underbrace{\Pi_v^{\perp s}}_{\text{Transverse/Twist}}.$$
The divergence between the two examples is fundamentally rooted in the radial alignment of the estimator's error in the Hilbert space. Because an unbiased estimator must satisfy $\mathbb{E}_\theta[T] = \theta$, it follows that the lifted error $Z_v = (T-\theta)s$ is always orthogonal to the radial vector: $\langle Z_v, s \rangle = 0$. Consequently, all unbiased estimators are ``radially blind" to the universal spherical curvature $J/4$. This blindness has a significant implication for the 3-outcome multinomial benchmark. Since the embedding $s(\Theta)$ is a spherical cap, the second fundamental form is purely radial, satisfying $\Pi_{jk} = -\frac{1}{4} J_{jk} s$. In this specific geometry, the transverse component vanishes ($\Pi_v^{\perp s} = 0$), which would imply $\mathcal{R}(v) = 0$ for any strictly unbiased estimator. Our choice to `inject' a coupling parameter $\gamma$ into the multinomial error vector $Z_v$ is therefore a deliberate theoretical construction. It allows us to utilize the multinomial model as a benchmark for isotropic normal information, effectively testing the SOS-SDP's ability to recover a full-rank matrix correction $\Delta$ when the manifold's curvature is uniform, even if that curvature is purely radial.

In actuality, the information gap $\mathcal{R}(v)$ is therefore determined solely by the coupling with the transverse curvature $\Pi_v^{\perp s}$. In our Gaussian model, this transverse component is highly anisotropic and specifically tied to the $v_2 \tilde{v}_1^2$ parameter interaction, resulting in the characteristic quartic pinching at the axes. In contrast, in the 3-outcome multinomial case, the normal space is purely radial, $\mathcal{N}_1 = \text{span}\{s\}$. As noted already in the last paragraph, a strictly unbiased estimator is radially blind ($\langle Z_v, s \rangle = 0$), but this example serves as a benchmark for isotropic coupling. By considering a lifted error $Z_v$ with a constant projection $\gamma$ onto the normal space, we demonstrate that the SOS-SDP recovers the full-rank matrix $\Delta = \gamma^2 I$ whenever the curvature and error alignment are uniform. In contrast, the Gaussian model possesses a higher-dimensional normal space where the estimator couples with transverse (non-radial) curvature. The `pinching' there arises because that transverse curvature is anisotropic, unlike the idealized isotropic coupling of the spherical benchmark.

This synthesis validates the SOS-SDP framework as a robust generalization of higher-order estimation theory, providing safe and rigorously certified estimates in settings where traditional matrix benchmarks may be overly optimistic.

\begin{figure}[ht]
    \centering
    \begin{subfigure}[b]{0.48\textwidth}
        \centering
        \includegraphics[width=\textwidth]{sg.png}
        \caption{Gaussian Model: The Pinching Effect.}
        \label{fig:gauss_pinched}
    \end{subfigure}
    \hfill
    \begin{subfigure}[b]{0.48\textwidth}
        \centering
        \includegraphics[width=\textwidth]{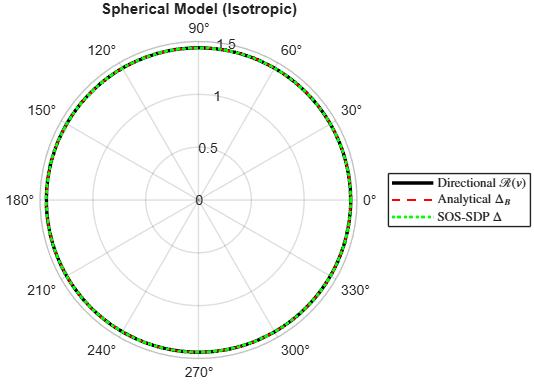}
        \caption{Spherical Model: Isotropic Curvature.}
        \label{fig:sphere_isotropic}
    \end{subfigure}
    \caption{Comparison of directional and matrix bounds across differing manifold geometries. In the Gaussian case (a), the analytical $\Delta_\mathrm{B}$ violates the directional bound $\mathcal{R}(v)$ at the axes, leading to $\Delta=0$ in the (SOS-SDP). In the spherical case (b), all bounds coincide.}
    \label{fig:comparison}
\end{figure}

\section{Conclusion and future work}

In this work, we developed nonasymptotic extrinsic curvature corrections to the CRB for vector parameters, extending previous scalar results to the multivariate setting. By utilizing the square-root embedding $s(\theta)$, we demonstrated that the extrinsic geometry of the model manifold provides a rigorous foundation for deriving higher-order variance bounds. 

A primary contribution of this framework is the derivation of the directional curvature-corrected bound $\mathcal{R}(v)$, which captures the second-order structure of the model's normal bundle to provide a high-fidelity representation of estimation limits. Our analysis reveals a fundamental dichotomy in multivariate curvature effects. In the curved Gaussian location model, we identified a pinching effect where the directional bound vanishes along principal axes. In such cases, we showed that classical benchmarks using the second-order Bhattacharyya matrix represent subspace-level variance which may decouple from the directional curvature $\Pi_v$. Consequently, the Bhattacharyya difference matrix $\Delta_\mathrm{B}=B^{(2)}-B^{(1)}$ fails to provide valid global certificates for general estimators, while our SOS-SDP framework yields a robust, conservative matrix correction $\Delta$. Conversely, the spherical multinomial model demonstrates that in isotropic geometries, the SOS-certified bound $\Delta$ successfully recovers the classical second-order limits, aligning $\mathcal{R}(v)$, $\Delta_{\mathrm{B}}$, and the SDP solution exactly. This isotropic case serves as a geometric benchmark for models where the curvature and estimator coupling are uniformly distributed across the parameter space. In contrast, the Gaussian `pinching' identifies the structural anisotropy of the manifold's transverse (non-radial) curvature, whose coupling with the estimator error vanishes along the principal score axes. This distinction reveals that the SOS-SDP does not merely approximate a bound, but rigorously identifies whether the manifold’s `twist' occurs in dimensions accessible to the estimator.
These results underscore that a faithful multivariate CRB refinement must account for the directional topology of the manifold. 

As in our work with scalar parameters, a possible drawback is that numerical computation of our bounds may be slightly trickier (but definitely within manageable complexity, as seen in the examples) than the classical ones using simpler score functions, but this increased complexity of the square-root scores is also what helps us capture parts of the estimator variance missed by those bounds. Furthermore, our motivation here was also largely theoretical, seeking to use extrinsic geometry of the statistical manifold to explain estimator inefficiency.

Several directions remain open for future study. An immediate extension is to adapt the present analysis to Bayesian or restricted-bias settings, where the interaction between prior information and manifold curvature may yield tighter nonasymptotic bounds. It would also be valuable to investigate conditions under which our bounds using higher-order jet-space corrections ($\mathcal{T}_m$ for $m > 2$) are attainable by concrete statistical estimators. Finally, connecting this extrinsic Hilbert-space perspective with information-geometric approaches to intrinsic CRBs may provide a unified understanding of how normal curvature effects — governed by the second and higher-order fundamental forms — interact with the intrinsic Riemannian structure of the statistical manifold.

\bibliography{sn-bibliography}


\begin{thebibliography}{18}
\ifx \bisbn   \undefined \def \bisbn  #1{ISBN #1}\fi
\ifx \binits  \undefined \def \binits#1{#1}\fi
\ifx \bauthor  \undefined \def \bauthor#1{#1}\fi
\ifx \batitle  \undefined \def \batitle#1{#1}\fi
\ifx \bjtitle  \undefined \def \bjtitle#1{#1}\fi
\ifx \bvolume  \undefined \def \bvolume#1{\textbf{#1}}\fi
\ifx \byear  \undefined \def \byear#1{#1}\fi
\ifx \bissue  \undefined \def \bissue#1{#1}\fi
\ifx \bfpage  \undefined \def \bfpage#1{#1}\fi
\ifx \blpage  \undefined \def \blpage #1{#1}\fi
\ifx \burl  \undefined \def \burl#1{\textsf{#1}}\fi
\ifx \doiurl  \undefined \def \doiurl#1{\url{https://doi.org/#1}}\fi
\ifx \betal  \undefined \def \betal{\textit{et al.}}\fi
\ifx \binstitute  \undefined \def \binstitute#1{#1}\fi
\ifx \binstitutionaled  \undefined \def \binstitutionaled#1{#1}\fi
\ifx \bctitle  \undefined \def \bctitle#1{#1}\fi
\ifx \beditor  \undefined \def \beditor#1{#1}\fi
\ifx \bpublisher  \undefined \def \bpublisher#1{#1}\fi
\ifx \bbtitle  \undefined \def \bbtitle#1{#1}\fi
\ifx \bedition  \undefined \def \bedition#1{#1}\fi
\ifx \bseriesno  \undefined \def \bseriesno#1{#1}\fi
\ifx \blocation  \undefined \def \blocation#1{#1}\fi
\ifx \bsertitle  \undefined \def \bsertitle#1{#1}\fi
\ifx \bsnm \undefined \def \bsnm#1{#1}\fi
\ifx \bsuffix \undefined \def \bsuffix#1{#1}\fi
\ifx \bparticle \undefined \def \bparticle#1{#1}\fi
\ifx \barticle \undefined \def \barticle#1{#1}\fi
\bibcommenthead
\ifx \bconfdate \undefined \def \bconfdate #1{#1}\fi
\ifx \botherref \undefined \def \botherref #1{#1}\fi
\ifx \url \undefined \def \url#1{\textsf{#1}}\fi
\ifx \bchapter \undefined \def \bchapter#1{#1}\fi
\ifx \bbook \undefined \def \bbook#1{#1}\fi
\ifx \bcomment \undefined \def \bcomment#1{#1}\fi
\ifx \oauthor \undefined \def \oauthor#1{#1}\fi
\ifx \citeauthoryear \undefined \def \citeauthoryear#1{#1}\fi
\ifx \endbibitem  \undefined \def \endbibitem {}\fi
\ifx \bconflocation  \undefined \def \bconflocation#1{#1}\fi
\ifx \arxivurl  \undefined \def \arxivurl#1{\textsf{#1}}\fi
\csname PreBibitemsHook\endcsname

\bibitem[\protect\citeauthoryear{Krishnan}{2025}]{srk}
\begin{botherref}
\oauthor{\bsnm{Krishnan}, \binits{S.R.}}:
Improving Cram\'er-Rao Bound And Its Variants: An Extrinsic Geometry
  Perspective
(2025).
\url{https://arxiv.org/abs/2509.17886}
\end{botherref}
\endbibitem

\bibitem[\protect\citeauthoryear{Ghosh and Sathe}{1987}]{ghosh1987convergence}
\begin{botherref}
\oauthor{\bsnm{Ghosh}, \binits{J.}},
\oauthor{\bsnm{Sathe}, \binits{Y.}}:
Convergence of Bhattacharya bounds-revisited.
Sankhy{\=a}: The Indian Journal of Statistics, Series A,
37--42
(1987)
\end{botherref}
\endbibitem

\bibitem[\protect\citeauthoryear{Amari and Nagaoka}{2000}]{amari}
\begin{bbook}
\bauthor{\bsnm{Amari}, \binits{S.}},
\bauthor{\bsnm{Nagaoka}, \binits{H.}}:
\bbtitle{Methods of Information Geometry}.
\bpublisher{American Mathematical Society and Oxford University Press},
(\byear{2000})
\end{bbook}
\endbibitem

\bibitem[\protect\citeauthoryear{Takeuchi and
  Akahira}{2003}]{takeuchi2003second}
\begin{bchapter}
\bauthor{\bsnm{Takeuchi}, \binits{K.}},
\bauthor{\bsnm{Akahira}, \binits{M.}}:
\bctitle{Second order asymptotic efficiency in terms of asymptotic variances of
  the sequential maximum likelihood estimation procedures}.
In: \bbtitle{Joint Statistical Papers Of Akahira And Takeuchi},
pp. \bfpage{319}--\blpage{324}.
\bpublisher{World Scientific}, 
(\byear{2003})
\end{bchapter}
\endbibitem

\bibitem[\protect\citeauthoryear{Okamoto et~al.}{1991}]{okamoto1991asymptotic}
\begin{barticle}
\bauthor{\bsnm{Okamoto}, \binits{I.}},
\bauthor{\bsnm{Amari}, \binits{S.-I.}},
\bauthor{\bsnm{Takeuchi}, \binits{K.}}:
\batitle{Asymptotic theory of sequential estimation: Differential geometrical
  approach}.
\bjtitle{The Annals of Statistics}
\bvolume{19}(\bissue{2}),
\bfpage{961}--\blpage{981}
(\byear{1991})
\end{barticle}
\endbibitem

\bibitem[\protect\citeauthoryear{Smith}{2005}]{smith_intrinsic_crb}
\begin{barticle}
\bauthor{\bsnm{Smith}, \binits{S.T.}}:
\batitle{Covariance, subspace, and intrinsic Cramér–Rao bounds}.
\bjtitle{IEEE Trans. Signal Process.}
\bvolume{53}(\bissue{5}),
\bfpage{1610}--\blpage{1630}
(\byear{2005})
\end{barticle}
\endbibitem

\bibitem[\protect\citeauthoryear{Barrau and
  Bonnabel}{2013}]{boumal2013_intrinsic_crb}
\begin{bchapter}
\bauthor{\bsnm{Barrau}, \binits{A.}},
\bauthor{\bsnm{Bonnabel}, \binits{S.}}:
\bctitle{A note on the intrinsic Cramér–Rao bound}.
In: \beditor{\bsnm{Nielsen}, \binits{F.}},
\beditor{\bsnm{Barbaresco}, \binits{F.}} (eds.)
\bbtitle{Geometric Science of Information},
pp. \bfpage{377}--\blpage{386}.
\bpublisher{Springer},
\blocation{Berlin, Heidelberg}
(\byear{2013})
\end{bchapter}
\endbibitem

\bibitem[\protect\citeauthoryear{Bouchard et~al.}{2024}]{bouchard2024}
\begin{barticle}
\bauthor{\bsnm{Bouchard}, \binits{F.}},
\bauthor{\bsnm{Renaux}, \binits{A.}},
\bauthor{\bsnm{Ginolhac}, \binits{G.}},
\bauthor{\bsnm{Breloy}, \binits{A.}}:
\batitle{Intrinsic Bayesian Cramér–Rao bound with an application to covariance
  matrix estimation}.
\bjtitle{IEEE Transactions on Information Theory}
\bvolume{70}(\bissue{12}),
\bfpage{9261}--\blpage{9276}
(\byear{2024})
\doiurl{10.1109/TIT.2024.3480280}
\end{barticle}
\endbibitem

\bibitem[\protect\citeauthoryear{Mishra and Kumar}{2020}]{mishra2020}
\begin{bchapter}
\bauthor{\bsnm{Mishra}, \binits{K.V.}},
\bauthor{\bsnm{Kumar}, \binits{M.A.}}:
\bctitle{Generalized Bayesian Cramér–Rao inequality via information geometry of
  relative $\alpha$-entropy}.
In: \bbtitle{2020 54th Annual Conference on Information Sciences and Systems
  (CISS)},
pp. \bfpage{1}--\blpage{6}
(\byear{2020}).
\bcomment{IEEE}
\end{bchapter}
\endbibitem

\bibitem[\protect\citeauthoryear{Ashok~Kumar and
  Vijay~Mishra}{2020}]{ashok2020}
\begin{barticle}
\bauthor{\bsnm{Ashok~Kumar}, \binits{M.}},
\bauthor{\bsnm{Vijay~Mishra}, \binits{K.}}:
\batitle{Cram{\'e}r--Rao lower bounds arising from generalized Csisz{\'a}r
  divergences}.
\bjtitle{Information Geometry}
\bvolume{3}(\bissue{1}),
\bfpage{33}--\blpage{59}
(\byear{2020})
\end{barticle}
\endbibitem

\bibitem[\protect\citeauthoryear{Mishra and Kumar}{2021}]{mishra2021}
\begin{barticle}
\bauthor{\bsnm{Mishra}, \binits{K.V.}},
\bauthor{\bsnm{Kumar}, \binits{M.A.}}:
\batitle{Information geometry and classical Cramér–Rao-type inequalities}.
\bjtitle{Information Geometry}
\bvolume{45},
\bfpage{79}
(\byear{2021})
\end{barticle}
\endbibitem

\bibitem[\protect\citeauthoryear{Li and Zhao}{2023}]{Li}
\begin{barticle}
\bauthor{\bsnm{Li}, \binits{W.}},
\bauthor{\bsnm{Zhao}, \binits{J.}}:
\batitle{Wasserstein information matrix}.
\bjtitle{Information Geometry}
\bvolume{6}(\bissue{1}),
\bfpage{203}--\blpage{255}
(\byear{2023})
\end{barticle}
\endbibitem

\bibitem[\protect\citeauthoryear{Nishimori and Matsuda}{2025}]{nishimori2025}
\begin{botherref}
\oauthor{\bsnm{Nishimori}, \binits{H.}},
\oauthor{\bsnm{Matsuda}, \binits{T.}}:
On the attainment of the Wasserstein--Cramér–Rao lower bound: H. Nishimori et
  al.
Information Geometry,
1--12
(2025)
\end{botherref}
\endbibitem

\bibitem[\protect\citeauthoryear{Trillos et~al.}{2025}]{garciatrillos_wcrb}
\begin{botherref}
\oauthor{\bsnm{Trillos}, \binits{N.G.}},
\oauthor{\bsnm{Jaffe}, \binits{A.Q.}},
\oauthor{\bsnm{Sen}, \binits{B.}}:
Wasserstein--Cramér–Rao theory of unbiased estimation.
arXiv preprint arXiv:2511.07414
(2025)
\end{botherref}
\endbibitem

\bibitem[\protect\citeauthoryear{Amari and
  Matsuda}{2024}]{amari_matsuda_Wasserstein}
\begin{barticle}
\bauthor{\bsnm{Amari}, \binits{S.-i.}},
\bauthor{\bsnm{Matsuda}, \binits{T.}}:
\batitle{Information geometry of Wasserstein statistics on shapes and affine
  deformations}.
\bjtitle{Information Geometry}
\bvolume{7}(\bissue{2}),
\bfpage{285}--\blpage{309}
(\byear{2024})
\end{barticle}
\endbibitem

\bibitem[\protect\citeauthoryear{Fukushi et~al.}{2025}]{fukushi2024flatness}
\begin{botherref}
\oauthor{\bsnm{Fukushi}, \binits{A.}},
\oauthor{\bsnm{Nakanishi-Ohno}, \binits{Y.}},
\oauthor{\bsnm{Matsuda}, \binits{T.}}:
Flatness of location-scale-shape models under the Wasserstein metric.
arXiv preprint arXiv:2511.09959
(2025)
\end{botherref}
\endbibitem

\bibitem[\protect\citeauthoryear{Zuo}{2025}]{zuo2025}
\begin{botherref}
\oauthor{\bsnm{Zuo}, \binits{X.}}:
Advancing computations for Wasserstein gradient flows.
PhD thesis,
University of California, Los Angeles
(2025)
\end{botherref}
\endbibitem

\bibitem[\protect\citeauthoryear{Efron}{1975}]{efron1975}
\begin{botherref}
\oauthor{\bsnm{Efron}, \binits{B.}}:
Defining the curvature of a statistical problem (with applications to second
  order efficiency).
The Annals of Statistics,
1189--1242
(1975)
\end{botherref}
\endbibitem

\end{thebibliography}
\end{document}